\definecolor{orcidlogocol}{HTML}{A6CE39}
\newlength\FHoffset
\newlength\FHleft
\newlength\FHright
\renewcommand{\headrulewidth}{1.0pt} 
\newbox\FHline
\title{\textbf{Almost anomalous dissipation in advection-diffusion of a divergence-free passive vector}}
\author[]{\large 
\textbf{Anuj Kumar}\footnote{Department of Mathematics, University of California Berkeley, CA 94720, USA. \textit{Email:}  \href{mailto:anujkumar@berkeley.edu}{anujkumar@berkeley.edu}. 
}
}
\date{}
\newtheoremstyle{mystyle}
  {}
  {}
  {\itshape}
  {}
  {\bfseries}
  {.}
  { }
  {\thmname{#1}\thmnumber{ #2}\thmnote{ (#3)}}
\theoremstyle{mystyle}
\newtheorem{theorem}{Theorem}[section]
\newtheorem{proposition}[theorem]{Proposition}
\newtheorem{lemma}{Lemma}[section]
\theoremstyle{definition}
\newtheorem{remark}{Remark}[section]
\newcommand\norm[1]{\left\lVert#1\right\rVert}
\newcommand{\bs}[1]{\boldsymbol{#1}}
\newcommand{\wt}[1]{\widetilde{#1}}
\newcommand{\ol}[1]{\overline{#1}}
\newcommand\reallywidecheck[1]{%
\savestack{\tmpbox}{\stretchto{%
  \scaleto{%
    \scalerel*[\widthof{\ensuremath{#1}}]{\kern-.6pt\bigwedge\kern-.6pt}%
    {\rule[-\textheight/2]{1ex}{\textheight}}
  }{\textheight}%
}{0.5ex}}%
\stackon[1pt]{#1}{\scalebox{-1}{\tmpbox}}%
}
\newcommand\RedeclareMathOperator{%
  \@ifstar{\def\rmo@s{m}\rmo@redeclare}{\def\rmo@s{o}\rmo@redeclare}%
}
\newcommand\rmo@redeclare[2]{%
  \begingroup \escapechar\m@ne\xdef\@gtempa{{\string#1}}\endgroup
  \expandafter\@ifundefined\@gtempa
     {\@latex@error{\noexpand#1undefined}\@ehc}%
     \relax
  \expandafter\rmo@declmathop\rmo@s{#1}{#2}}
\newcommand\rmo@declmathop[3]{%
  \DeclareRobustCommand{#2}{\qopname\newmcodes@#1{#3}}%
}
 \RedeclareMathOperator{\div}{div}
\DeclareMathOperator\supp{supp}
\def\Xint#1{\mathchoice
{\XXint\displaystyle\textstyle{#1}}%
{\XXint\textstyle\scriptstyle{#1}}%
{\XXint\scriptstyle\scriptscriptstyle{#1}}%
{\XXint\scriptscriptstyle\scriptscriptstyle{#1}}%
\!\int}
\def\XXint#1#2#3{{\setbox0=\hbox{$#1{#2#3}{\int}$ }
\vcenter{\hbox{$#2#3$ }}\kern-.6\wd0}}
\def\dashint{\Xint-}
\numberwithin{equation}{section}
\newtheorem{question}[theorem]{Question}
\begin{document}

\maketitle

\begin{abstract}
We explore the advection-diffusion of a passive vector described by $\partial_t \bs{u} + \bs{U} \cdot \nabla \bs{u} = - \nabla p + \nu \Delta \bs{u}$, where both $\bs{U}$ and $\bs{u}$ are divergence-free velocity fields. We approach this equation from an input/output perspective, with $\bs{U}$ as the input and $\bs{u}$ as the output. This input/output viewpoint has been widely applied in recent studies on passive scalar equation, in the context of anomalous dissipation, mixing, optimal scalar transport, and nonuniqueness problems. What makes the passive vector equation considerably more challenging compared to the passive scalar equation is the lack of a Lagrangian perspective due to the presence of pressure.

In this paper, rather than requiring
 $\bs{U}$ and $\bs{u}$ to be identical (as in the Navier-Stokes equation), we require $\bs{U}$ and $\bs{u}$ to be identical only in certain characteristics. We focus on the case where the characteristics in question are anomalous and enhanced dissipation. We study the advection-diffusion of a passive vector in a Couette flow configuration. The main result of this paper is a construction of the velocity field $\bs{U}$ for which the energy dissipation scales as $(\log \nu^{-1})^{-2}$ such that the energy dissipation in velocity field $\bs{u}$ scales at least as  $(\log \nu^{-1})^{-2}$. This means that both $\bs{U}$ and $\bs{u}$ exhibit near-anomalous dissipation, where the rate of energy dissipation decreases more slowly than any power-law $\nu^{\alpha}$ for any $\alpha > 0$. The result in this paper is not just a mathematical construct; it closely resembles the behavior of turbulent flow in a channel. The $(\log \nu^{-1})^{-2}$ decrease of energy dissipation is predicted by phenomenological theories of wall-bounded turbulence, a prediction that has been extensively validated through  experiments and numerical simulations. Finally, our study motivates the potential to close the gap between $\bs{U}$ and $\bs{u}$—at least with the help of computational tools—and to investigate whether the $(\log \nu^{-1})^{-2}$ decrease in energy dissipation can be maintained.


\end{abstract}

\section{Introduction}
\label{sec: first problem}
In this paper, we are interested in the advection-diffusion of a passive divergence-free vector field $\bs{u}$. The equations governing the evolution of $\bs{u} : \Omega \to \mathbb{R}^n$ are given by
\noindent
\begin{subequations}
\begin{align}
& \nabla \cdot \bs{u} = 0, 
\label{eqn: 1 prb div free}\\
& \partial_t \bs{u} + \bs{U} \cdot \nabla \bs{u} = - \nabla p + \nu \Delta \bs{u},
\label{eqn: 1 prb momentum}
\end{align}
\label{eqn: conv diff u}
\end{subequations}

\noindent
Here, $\Omega \subseteq \mathbb{R}^d$ is the domain of interest and $d$ is either $2$ or $3$. The velocity field $\bs{U}: \Omega \to \mathbb{R}^d$ (not necessarily the same as $\bs{u}$) is the advecting velocity field and is divergence free as well:
\begin{align}
\nabla \cdot \bs{U} = 0.
\end{align}
 The pressure $p: \Omega \to \mathbb{R}$ is a scalar that ensures  the vector field $\bs{u}$ remains divergence-free. Both velocity fields $\bs{u}$ and $\bs{U}$ satisfy the same initial and boundary conditions.  
 
In this paper, we explore these equations from the perspective of control theory, where the advecting field $\bs{U}$ can be seen as the control and the advected field $\bs{u}$ is the 
output. Of course, when the input $\bs{U}$ and output $\bs{u}$ are exactly the same then we recover the incompressible Navier--Stokes equations:
 \begin{subequations}
\begin{align}
& \nabla \cdot \bs{u} = 0, 
\label{eqn: NSE div free}\\
& \partial_t \bs{u} + \bs{u} \cdot \nabla \bs{u} = - \nabla p + \nu \Delta \bs{u}.
\label{eqn: NSE momentum}
\end{align}
\label{eqn: NSE}
\end{subequations}

\noindent
However, rather than insisting $\bs{U}$ and $\bs{u}$ to be precisely the same, we make the situation a bit more flexible by requiring $\bs{U}$ and $\bs{u}$ to be identical only in certain characteristics. This paper focuses on the scenario where the characteristic in question is anomalous or enhanced dissipation.

We note that this input/output perspective has been central to many research studies in fluid dynamics in recent years, particularly concerning the scalar equation (with or without diffusion)
\begin{align}
\partial_t \theta + \bs{U} \cdot \nabla \theta = \kappa \Delta \theta, \quad \kappa \geq 0,
\label{eqn: scalar diff}
\end{align}
where the overarching question always is whether it is possible to design a vector field $\bs{U}$ (subject to certain constraints) such that the scalar $\theta$ exhibits desired characteristics. This input/output point-of-view has been successfully utilized in various contexts, including studies on anomalous dissipation in passive scalars \citep{drivas2022anomalous, colombo2023anomalous, armstrong2023anomalous, elgindi2023norm}, optimal mixing of scalars \citep{lin2011optimal, iyer2014lower, YaoZlatos17, AlbertiCrippaMazzucato19, ElgindiZlatosuniversalmixer}, optimal scalar transport \citep{hassanzadeh2014wall, doering2019optimal, kumar2022three, tobasco2021optimal}, and even problems related to the nonuniqueness of solutions to the transport equation \eqref{eqn: scalar diff} \citep{depauw2003non, kumar2023nonuniqueness, brue2024sharp}. 

Many of the studies referenced above have used the Lagrangian perspective available for the transport equation in an essential way. However, we note that analyzing equations (\ref{eqn: conv diff u}{\color{blue}a-b}) is much more challenging, as the Lagrangian viewpoint is no longer applicable in general. As such, it is not at all clear that creating small scales in $\bs{U}$ will lead to small scale creation in $\bs{u}$. 

In this paper, we study equations (\ref{eqn: conv diff u}{\color{blue}a-b}) within a Couette flow configuration (see Figure \ref{fig: Couette}). One of the most important highlight of our work is the construction of a family of velocity fields $\{\bs{U}_\nu\}$ with almost anomalous dissipation, in particular the energy dissipation scales as ($(\log \nu^{-1})^{-2}$), such that the corresponding solutions  $\{\bs{u}_\nu\}$ also has energy dissipation scaling at least as ($(\log \nu^{-1})^{-2}$). Notably, the rate of energy dissipation in both $\{\bs{U}_\nu\}$ and $\{\bs{u}_\nu\}$ is slower than power law $\nu^\alpha$ for any $\alpha > 0$. Our result is sharp in the sense that a logarithmic decrease of energy dissipation in a channel flow is a prediction of phenomenological theories of wall turbulence \citep{schlichting2016boundary} and have been observed in experiments \citep{smits2011high, marusic2013logarithmic} to an enormous degree. As a side note, we observe that anomalous dissipation in channel or pipe flow geometries should, in principle, be achievable when there is sufficient transversal flow toward the boundary. For instance, see the work of \cite{nguyen2011energy} on  dipole crashing into a wall in two dimensions.

\subsection{Purpose of studying equation (\ref{eqn: conv diff u}) and anomalous dissipation in a vector field}
The main purpose of introducing equations (\ref{eqn: conv diff u}{\color{blue}a-b}) in this paper is to discuss a problem pertaining to anomalous dissipation in a divergence-free velocity field. However, the significance of these equations extends far beyond this topic. For instance, questions related to nonuniqueness and loss of regularity in the Navier–Stokes equations are among the most pressing concerns in mathematical fluid dynamics. Equations (\ref{eqn: conv diff u}{\color{blue}a-b}) provide a valuable platform for studying these problems without the complications introduced by the nonlinear term in the Navier–Stokes equations. At the same time, these equations still require us to confront the true nature of the pressure term, which is absent in the advection-diffusion of a scalar. Consequently, studying equations (\ref{eqn: conv diff u}) can allow us to gain deeper insights into the role of pressure in incompressible fluid flows.

An important characteristics of turbulent flow is that the rate of energy dissipation $\varepsilon$ becomes independent of viscosity in the limit of vanishingly small viscosity \citep{sreenivasan1998update, frisch1995turbulence}. Mathematically, it means that the solution of the Navier--Stokes equations (\ref{eqn: NSE}) obey
\begin{align}
\varepsilon \coloneqq \nu \langle |\nabla \bs{u}|^2 \rangle \geq c > 0, \quad \forall \nu > 0,
\label{prb: anom dissp exp}
\end{align}
for some constant $c$. Here, the angle brackets denote the long-time volume average:
\begin{align}
\langle [\, \cdot \,] \rangle \coloneqq \limsup_{T \to \infty} \frac{1}{T} \int_{0}^{T}  \dashint_{\Omega} [\, \cdot \,] \, {\rm d} \bs{x} \, {\rm d} t, \qquad \text{where} \qquad \dashint_{\Omega} [\, \cdot \,] \, {\rm d} \bs{x} \coloneqq \frac{1}{|\Omega|} \int_{\Omega} [\, \cdot \,] \, {\rm d} \bs{x}.
\label{prb: long time avg}
\end{align}
This phenomenon, known as the anomalous dissipation of energy and sometimes also referred to as the ``zeroth law'' of turbulence due to its fundamental nature \citep{sreenivasan1984scaling}. This phenomenon has been extensively validated through numerous experiments and direct numerical simulations \citep{pearson2002measurements, kaneda2003energy}. 

The phenomenon of anomalous dissipation can initially seem counterintuitive: how can a system continue to dissipate energy at a constant rate if the factor responsible for friction (in our case it would be viscosity) is gradually eliminated?  For instance, in a rigid body system with finite number of degree of freedoms, such as a double pendulum, the rate of energy dissipation decreases linearly with the friction factor and eventually approaches zero. However, this analogy is not applicable for fluid systems, where the number of degrees of freedom can increase without bound as viscosity approaches zero. The turbulent flows acquire these increase number of degree of freedom through the increase in the number of small scales in the flow. The increase in small scales is responsible for the growth of $\langle |\nabla \bs{u}|^2 \rangle$ and for turbulent flows this quantity grows precisely as $\nu^{-1}$ leading to anomalous dissipation of the energy.

Assuming that the turbulent flows acquires many of its mean properties through invariant solutions (steady or time-periodic) to the Navier–Stokes equations \citep{kawahara2012significance}, one would expect to find a family of these simple invariant solutions that also exhibit anomalous dissipation. This raises the following question:
\begin{question}[Solution of NSE with anomalous dissipation]
Is there a family of solution $\{\bs{u}_\nu\}_{\nu > 0}$ to the Navier--Stokes equations (\ref{eqn: NSE}) for which 
\begin{align}
\nu \langle |\nabla \bs{u}_{\nu}|^2 \rangle \geq c > 0 \quad \text{for all} \quad \nu > 0?
\label{prb: anom dissp NSE}
\end{align}
\label{Q: anom dissp NSE}
\end{question}
Interestingly, to date, there are no known examples—whether steady or time-dependent—flows that follow the relation (\ref{prb: anom dissp NSE}) \citep{drivas2022self}. The two physically motivated settings to consider this question are the following:
\begin{enumerate}[label = (\roman*)]
\item Tangential velocity-driven flow: In this setting $\Omega$ is a smooth domain with impermeable boundaries, where the flow is driven by a prescribed tangential boundary condition (independent of viscosity). 
\item Smooth body forcing-driven flow: The flow inside a periodic domain or a smooth bounded domain with homogeneous Dirichlet boundary condition  driven by the presence of a smooth forcing $\bs{f}$ (independent of viscosity) in the equation (\ref{eqn: conv diff u}{\color{blue} b}). In this context, the quantity that is expected to become independent of viscosity is the energy dissipation per unit kinetic energy, i.e., $\frac{\nu \langle |\nabla \bs{u}_{\nu}|^2 \rangle}{\langle | \bs{u}_{\nu}|^2 \rangle}$.
\end{enumerate}

We note that recently \citet{brue2023anomalous} considered the case where the forcing is allowed to depend on $\nu$. Constructing velocity fields that answers the Question \ref{Q: anom dissp NSE} in the two settings mentioned above at present remains out of reach. Inspired by this question, one can pose a similar question within the framework of passive vector equation (\ref{eqn: conv diff u}). 
\begin{question}[Anomalous dissipation in a divergence-free passive vector field]
For some $\nu_0 > 0$, can one construct a family of smooth divergence-free vector fields $\{\bs{U}_\nu\}_{\nu_0 > \nu > 0}$ exhibiting anomalous dissipation, in particular, obeying
\begin{align}
\lim_{\nu \to 0} \nu \langle |\nabla \bs{U}_{\nu}|^2 \rangle = c_1 > 0
\end{align}
such that the corresponding family of solutions $\{\bs{u}_\nu\}_{\nu_0 > \nu > 0}$ to (\ref{eqn: conv diff u}) satisfies
\begin{align}
\nu \langle |\nabla \bs{u}_{\nu}|^2 \rangle \geq c_2 > 0 \quad \text{for all} \quad \nu \in (0, \nu_0).
\end{align}
\label{Q: anom dissp div free}
\end{question}

At first glance, it might appear that answering this question shouldn't be too difficult:  if the advecting velocity field $\{\bs{U}_\nu\}_{\nu > 0}$ exhibits anomalous dissipation, then it is plausible that the velocity field it transports, $\{\bs{u}_\nu\}_{\nu > 0}$, should also display anomalous dissipation. For examples, if one creates small scale in the velocity field $\{\bs{U}_\nu\}_{\nu > 0}$, one may expect that the transported field will also develop small scales. This logic holds true for a passive scalar, where the scalar obediently follows the Lagrangian paths of the velocity field $\bs{U}$. However, in the passive vector case, the introduction of pressure complicates matters and may resist the formation of small-scale structures if the velocity field $\bs{U}$ is not carefully designed. This makes addressing Question \ref{Q: anom dissp div free} more challenging compared to its counterpart concerning anomalous dissipation in a passive scalar. Therefore, before addressing the difficult Question \ref{Q: anom dissp div free} about anomalous dissipation in a passive vector field, one can ask a similar question about enhanced dissipation.

\subsection{Enhanced dissipation in a vector field}
Enhanced dissipation refers to a phenomenon where mixing, transport of scalar and momentum, or energy dissipation in a fluid occurs at a much faster rate compared to pure molecular diffusion \citep{zelati2020relation}. This phenomenon has been widely studied recently in the mathematical fluid dynamics community. In a pure diffusion case, the rate of energy dissipation typically scales linearly with viscosity $\nu$ for small viscosity values. However, in the enhanced dissipation scenario, the rate of energy dissipation increases and potentially follow a mixed of power-law and logarithmic scaling $\nu^\alpha (\log \nu^{-1})^\beta$, where 
\begin{align}
\alpha \in [0, 1) \text{ and } \beta \in \mathbb{R}  \text{ if } \alpha > 0 \text{ and } \beta \leq 0 \text{ if } \alpha = 0.
\label{alpha beta choice}
\end{align}
With $\alpha = \beta = 0$, we arrive at anomalous dissipation. In the context of passive vector equations (\ref{eqn: conv diff u}), one can ask if the underlying advecting velocity field $\bs{U}$ is dissipation-enhancing, does that also mean the same is true for the advected velocity field $\bs{u}$? More precisely, we ask the following question.






\begin{question}[Enhanced dissipation in a divergence-free vector field]
\label{enhanced dissp}
Given the admissible choices of constants $\alpha$ and $\beta$ as in \eqref{alpha beta choice}, can one design smooth divergence-free vector fields $\{\bs{U}_\nu\}_{\nu_0 > \nu > 0}$ obeying
\begin{align}
\nu \langle |\nabla \bs{U}_{\nu}|^2 \rangle \sim \nu^{\alpha} (\log \nu^{-1})^\beta,
\end{align}
such that the corresponding family of solutions $\{\bs{u}_\nu\}_{\nu_0 > \nu > 0}$ to (\ref{eqn: conv diff u}) satisfies
\begin{align}
\nu \langle |\nabla \bs{u}_{\nu}|^2 \rangle \geq c \nu^{\alpha} (\log \nu^{-1})^\beta \quad \text{for all} \quad \nu \in (0, \nu_0),
\end{align}
where $c > 0$ is some constant?
\end{question}
In this paper, we answer this question affirmative with  $\alpha = 0$ and $\beta = -2$ in a Couette flow system. Therefore, both the advecting velocity fields $\{\bs{U}_\nu\}$ and passive velocity fields $\{\bs{u}_\nu\}$ are anomalous dissipating and only miss by a factor of logarithm.

\subsection{Couette Flow}
\label{sec: flow config}
In this paper, we focus on the well-known Couette flow configuration, which serves as a paradigmatic system for studying turbulence within a shear layer. It is arguably one of the most extensively studied flow configurations due to its simple geometry, which facilitates both experimental design and direct numerical simulations. The Couette flow is the flow of fluid between two parallel plates, where the bottom plate is stationary and to top plate moves with a prescribed velocity. We solve the equations (\ref{eqn: conv diff u}{\color{blue}a-b}) in the domain 
\begin{align}
\Omega \coloneqq \mathbb{T}_{L_1} \times [-1/2, 1/2] \times \mathbb{T}_{L_3}.
\end{align}
 \begin{figure}
\centering
 \includegraphics[scale = 0.5]{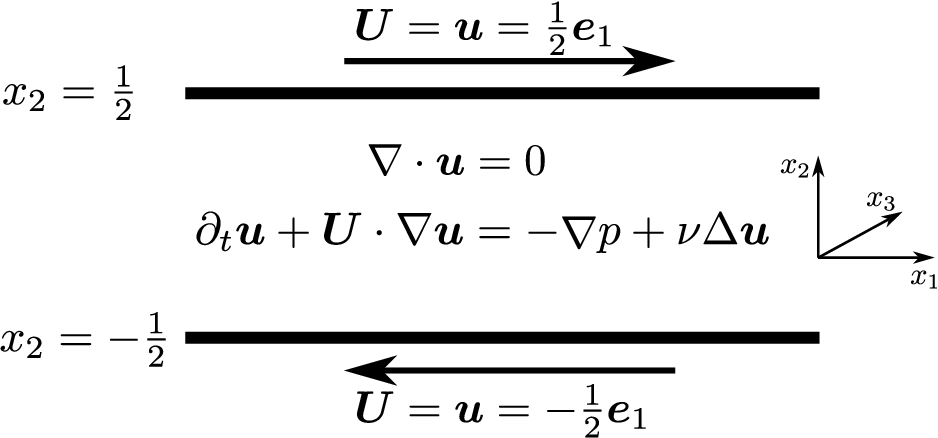}
 \caption{shows the schematic of the flow configuration.}
 \label{fig: Couette}
\end{figure}
Figure \ref{fig: Couette} shows a schematic of the flow configuration. We employ a Cartesian coordinate system $(x_1, x_2, x_3)$, where $x_1$ and $x_3$ represent the periodic direction directions with length $L_1$ and $L_3$ respectively, while $x_2$ denotes the wall-normal direction. The velocity field $\bs{u}$ and $\bs{U}$ satisfy the same initial conditions:
\begin{align}
\bs{U}(t, \cdot) \to \bs{u}_0, \quad \bs{u}(t, \cdot) \to \bs{u}_0 \qquad \text{in } L^2(\Omega) \text{ as } t \to 0^+,  
\label{flow config: IC}
\end{align}
and boundary conditions:
\begin{align}
\bs{U}(t, x_1, -1/2, x_3) = \bs{u}(t, x_1, -1/2, x_3) = -\bs{e}_1/2 \qquad \bs{U}(t, x_1, 1/2, x_3) = \bs{u}(t, x_1, 1/2, x_3) = \bs{e}_1/2,
\label{flow config: BC}
\end{align}
for all $t > 0$ and $(x_1, x_3) \in \mathbb{T}_{L_1} \times \mathbb{T}_{L_3}$, where $\bs{e}_1$ is the unit vector in the $x_1$ direction.

In rest of the paper, we will consider that the velocity field $\bs{U}$ belongs to $ L^2([0, \infty); H^1(\Omega)) \cap L^\infty([0, \infty) \times \Omega)$.  Furthermore, we assume that $\bs{u}_0 \in H^1(\Omega)$. Therefore, there is a unique  solution $\bs{u} \in L^\infty([0, \infty); H^1(\Omega)) \cap L^2([0, \infty); H^2(\Omega))$ of the equations (\ref{eqn: conv diff u}{\color{blue}a-b}) with initial condition (\ref{flow config: IC}) and boundary conditions (\ref{flow config: BC}).

Next, we define 
\begin{align}
\varepsilon_{\bs{U}} \coloneq \nu \langle |\nabla \bs{U}|^2 \rangle, \qquad \qquad \varepsilon_{\bs{u}} \coloneq \nu \langle |\nabla \bs{u}|^2 \rangle
\label{flow config: dissp U u}
\end{align}
to be the energy dissipation in the velocity $\bs{U}$ and $\bs{u}$ respectively. Recall, the angle brackets denote the long-time volume average as defined in (\ref{prb: long time avg}). For convenience, we also define the long-time horizontal average, which we denote using overbar, as
\begin{align}
\ol{[\, \cdot \,]} \coloneqq \limsup_{T \to \infty} \frac{1}{T} \int_{0}^{T}  \dashint_{\mathbb{T}_{L_1} \times \mathbb{T}_{L_3}} [\, \cdot \,] \, {\rm d} \bs{s} \, {\rm d} t \qquad \text{and} \qquad \dashint_{\mathbb{T}_{L_1} \times \mathbb{T}_{L_3}} [\, \cdot \,] \, {\rm d} \bs{s} \coloneqq \frac{1}{L_1 \, L_3} \int_{\Omega} [\, \cdot \,] \, {\rm d} x_1 {\rm d} x_3.
\label{prb: long time avg}
\end{align}
Next, we state the main results of our paper, all of which are in the Couette flow setting. The first result provides an upper bound on the dissipation in $\bs{u}$ in terms of the dissipation in $\bs{U}$.
\begin{theorem}[Main result I: Upper bound]
Let the energy dissipation in $\bs{U}$ and $\bs{u}$ be given by (\ref{flow config: dissp U u}). Then we have the following upper bound
\begin{align}
\varepsilon_{\bs{u}} \leq 4 (\varepsilon_{\bs{U}})^{\frac{1}{3}}.
\end{align}
Therefore, if $\{\bs{U}_{\nu}\}_{\nu > 0}$, a family of divergence-free velocity fields, obeys $\nu \langle |\nabla \bs{U}_\nu|^2 \rangle \sim \nu^{\alpha} (\log \nu^{-1})^\beta$  then the corresponding solutions $\{\bs{u}_{\nu}\}_{\nu > 0}$ to equations (\ref{eqn: conv diff u}{\color{blue}a-b}) will obey $\nu \langle |\nabla \bs{u}_\nu|^2 \rangle \leq c \nu^{\frac{\alpha}{3}} (\log \nu^{-1})^{\frac{\beta}{3}}$ for some positive constant $c$.
\label{thm: upper bound}
\end{theorem}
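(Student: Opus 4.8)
The plan is to reduce the theorem to an upper bound on the wall shear stress and then to extract the exponent $1/3$ from a boundary-layer balance. Write $\ol{(\cdot)}$ for the long-time horizontal average, so that $\ol{u_1}=\ol{u_1}(x_2)$. First I would record two exact identities. Averaging the $x_1$-component of (\ref{eqn: 1 prb momentum}), the time derivative and the $\partial_1 p$ term drop (bounded energy in $L^\infty_t H^1$, periodicity in $x_1$), incompressibility turns the advection term into $\partial_2\ol{U_2 u_1}$, and the viscous term into $\nu\partial_2^2\ol{u_1}$; integrating once in $x_2$ gives the constant-total-stress relation $\tau := \nu\partial_2\ol{u_1}(x_2)-\ol{U_2 u_1}(x_2)$, which is independent of $x_2$. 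A separate energy balance for $\bs{u}$ — testing (\ref{eqn: 1 prb momentum}) against $\bs{u}-x_2\bs{e}_1$, where the constant boundary values together with $U_2=u_2=0$ on the walls kill the advection and pressure terms — shows $\varepsilon_{\bs{u}}=\nu-\langle U_2 u_1\rangle$; integrating the stress relation across the channel and using $\ol{u_1}(\pm 1/2)=\pm 1/2$ identifies this with $\tau$. Thus the whole problem becomes an upper bound on the constant stress $\tau=\varepsilon_{\bs{u}}$.

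The naive route — Cauchy–Schwarz on $\langle U_2 u_1\rangle$ together with Poincaré (using $U_2=0$ at the walls) and $\langle|\nabla\bs{u}|^2\rangle=\varepsilon_{\bs{u}}/\nu$ — only yields $\varepsilon_{\bs{u}}\lesssim\nu+\varepsilon_{\bs{U}}^{1/2}\varepsilon_{\bs{u}}^{1/2}/\nu$, which is useless as $\nu\to 0$. The point I expect to be decisive is that near a wall both $U_2$ and the fluctuation of $u_1$ are forced to be small, while a steep mean profile there is expensive in dissipation. Accordingly I would integrate the stress identity over a bottom boundary layer $[-1/2,-1/2+\delta]$, obtaining $\tau\delta=\nu\bigl(\ol{u_1}(-1/2+\delta)+\tfrac12\bigr)-\int_{-1/2}^{-1/2+\delta}\ol{U_2 u_1}\,dx_2$, with $\delta$ to be optimized.

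I would bound the molecular term by Cauchy–Schwarz and the dissipation control $\nu\int(\partial_2\ol{u_1})^2\,dx_2\le\varepsilon_{\bs{u}}$ (a consequence of Jensen and $\nu\langle|\nabla\bs{u}|^2\rangle=\varepsilon_{\bs{u}}$), giving a contribution $\le\delta^{1/2}(\nu\tau)^{1/2}$. For the advective flux I would use the pointwise estimates $\ol{U_2^2}(x_2)\le d(x_2)\,\varepsilon_{\bs{U}}/\nu$ and $\ol{u_1^2}(x_2)^{1/2}\le\tfrac12+d(x_2)^{1/2}(\varepsilon_{\bs{u}}/\nu)^{1/2}$, where $d(x_2)$ is the distance to the nearer wall; both follow from the fundamental theorem of calculus combined with $U_2|_{\text{wall}}=0$, $u_1|_{\text{wall}}=\pm\tfrac12$, and the dissipation bounds. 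After dividing by $\delta$ this produces the self-consistent inequality $\tau\lesssim\delta^{-1/2}(\nu\tau)^{1/2}+(\varepsilon_{\bs{U}}/\nu)^{1/2}\delta^{1/2}+(\varepsilon_{\bs{U}}\tau)^{1/2}\nu^{-1}\delta$. Choosing $\delta\sim\nu\tau^{1/2}\varepsilon_{\bs{U}}^{-1/2}$ makes the last, advective term a fixed fraction of $\tau$ that can be absorbed into the left side, while balancing the first two terms gives $\tau^{3/4}\lesssim\varepsilon_{\bs{U}}^{1/4}$, i.e. $\tau=\varepsilon_{\bs{u}}\le 4\,\varepsilon_{\bs{U}}^{1/3}$; optimizing the free constant in $\delta$ produces the stated constant.

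The hard part is precisely the tension carried by that third term: because $u_1$ is only controlled in $L^2$ by $\varepsilon_{\bs{u}}/\nu$, any \emph{global} pairing loses a full power of $\nu$, and the estimate closes only when the boundary-layer width is tuned so that the advective flux through the layer is strictly smaller than the stress the layer must transmit. Verifying that this absorption constant is below $1$ — so that the inequality can actually be solved for $\tau$ rather than becoming vacuous — is the crux; everything else is the now-standard shear-flow bookkeeping in which constant total stress equals energy dissipation equals wall work.
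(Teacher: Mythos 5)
Your proposal is correct and follows essentially the same argument as the paper (the Seis-type bound): identify the dissipation $\varepsilon_{\bs{u}}$ with the constant total stress $\tau = \nu\,\ol{\partial_2 u_1} - \ol{U_2 u_1}$, average that identity over a near-wall layer of thickness $\delta$, estimate the viscous and Reynolds-stress contributions by Cauchy--Schwarz together with wall-anchored Poincar\'e/fundamental-theorem bounds, and optimize $\delta$ to extract the exponent $1/3$. The only differences are bookkeeping: you keep the wall value $-\tfrac12$ of $u_1$ explicit, which yields a third, absorbable term and a $\tau$-dependent choice of $\delta$, whereas the paper applies Poincar\'e to $u_1$ directly (legitimate only after replacing $u_1$ by $u_1+\tfrac12$, using that $U_2$ has zero horizontal mean) and takes $\delta = \nu\,\varepsilon_{\bs{U}}^{-1/3}$; both variants close and give the stated constant $4$.
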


\noindent
Our second result is a construction, we prove the following:
\begin{theorem}[Main result II: Construction]
There exists a family of smooth divergence-free velocity field $\{\bs{U}_{\nu}\}_{0 < \nu < 1/500}$ with
$\nu \langle |\nabla \bs{U}_\nu|^2 \rangle \sim (\log(\nu^{-1}))^2$ such that the solutions to equations (\ref{eqn: conv diff u}{\color{blue}a-b}) satisfy $\nu \langle |\nabla \bs{u}_\nu|^2 \rangle \geq c (\log(\nu^{-1}))^2$.
\label{thm: main thm}
\end{theorem}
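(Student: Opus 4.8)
The plan is to realize the mean-field structure of a turbulent channel explicitly and then to reduce the claimed lower bound on $\varepsilon_{\bs{u}}$ to a single transport correlation that can be estimated from the \emph{designed} field alone. For the construction I would build a smooth divergence-free $\bs{U}_\nu$ out of a wall-attached hierarchy of eddies at dyadic scales $\ell_k=2^{-k}$, $1\le k\le N$ with $N\sim\log\nu^{-1}$, the scale-$\ell_k$ eddy sitting at distance $\sim\ell_k$ from each wall (mixing length proportional to distance to the wall) and carrying a common speed $V$. Crucially these eddies must be \emph{efficiently mixing}---chaotic (three-dimensional steady) or time-periodic, rather than closed convection cells---so that each scale homogenizes within its turnover time $\sim\ell_k/V$; the speed $V$ and the viscous cutoff $\delta_\nu\sim\nu/V$ are then tuned so that $\varepsilon_{\bs{U}}=\nu\langle|\nabla\bs{U}_\nu|^2\rangle\sim(\log\nu^{-1})^{-2}$, the decaying near-anomalous rate predicted by the law of the wall and consistent with the upper bound of Theorem~\ref{thm: upper bound}. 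What distinguishes $\bs{U}_\nu$ from a pure shear---and what the construction must exploit---is a nonzero wall-normal component $U_2$ that carries the passive momentum toward the boundary.

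The second step is an exact reduction that removes the pressure at the level of averages. Averaging the $x_1$-component of \eqref{eqn: 1 prb momentum} over the homogeneous directions and in time annihilates the pressure, since $\ol{\partial_{x_1}p}=0$, and using $\nabla\cdot\bs{U}=0$ gives the stress balance $\partial_{x_2}\big(\nu\partial_{x_2}\ol{u_1}-\ol{U_2u_1}\big)=0$; because $U_2$ vanishes at both walls, the total stress is the $x_2$-independent constant $\tau_w=\nu\,\partial_{x_2}\ol{u_1}\big|_{\text{wall}}$. Pairing \eqref{eqn: 1 prb momentum} with $\bs{u}$ and integrating, the divergence-free conditions together with $U_2=u_2=0$ on the walls kill both the advective and the pressure boundary fluxes, so that in the long-time limit the output dissipation equals the wall stress,
\begin{align}
\varepsilon_{\bs{u}}=\tau_w=\nu-\int_{-1/2}^{1/2}\ol{U_2\,u_1}\,{\rm d}x_2.
\end{align}
Since $\nu\ll(\log\nu^{-1})^{-2}$, Theorem~\ref{thm: main thm} reduces to the single lower bound $-\int_{-1/2}^{1/2}\ol{U_2\,u_1}\,{\rm d}x_2\ge c\,(\log\nu^{-1})^{-2}$ on the momentum flux carried by the designed eddies.

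The heart of the argument, and the step where the passive vector departs from the passive scalar, is to bound this correlation from below \emph{without} a Lagrangian formula for $\bs{u}$. I would eliminate the pressure globally through the Leray projection, $\partial_t\bs{u}=-\mathbb{P}(\bs{U}\cdot\nabla\bs{u})+\nu\Delta\bs{u}$, and analyze the response of $u_1$ scale by scale. If the scale-$\ell_k$ eddy mixes efficiently, the associated cell problem should produce a mixing-length eddy diffusivity $D(x_2)\sim V\,\dist(x_2,\text{wall})$---as opposed to the much weaker $\sqrt{\nu V\,\dist(x_2,\text{wall})}$ of closed cellular flows, which would only give power-law dissipation---and feeding $\ol{U_2 u_1}\sim -D\,\partial_{x_2}\ol{u_1}$ into the stress balance forces a logarithmic mean profile with wall stress $\tau_w\sim V/\log\nu^{-1}$, which for the tuned amplitude is $\ge c(\log\nu^{-1})^{-2}$. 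The obstacle is precisely the pressure's effect on this closure: the Leray projector, though orthogonal in the energy balance, is nonlocal and can decorrelate $u_1$ from $U_2$. To control it I expect to need a quantitative per-scale estimate---via a maximum principle or an enstrophy-type bound for the curl $\bs{\omega}=\nabla\times\bs{u}$, which satisfies a pressure-free advection--diffusion equation with a stretching source built from $\nabla\bs{U}$---showing that the flux survives at each level, followed by a summation over the $\sim\log\nu^{-1}$ scales mirroring the computation of $\varepsilon_{\bs{U}}$.

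Finally I would check consistency and close the technical points. Theorem~\ref{thm: upper bound} gives $\varepsilon_{\bs{u}}\le4\,\varepsilon_{\bs{U}}^{1/3}\sim(\log\nu^{-1})^{-2/3}$, which lies comfortably above the constructed lower bound $(\log\nu^{-1})^{-2}$, so the two results are compatible and the residual gap is exactly the ``gap between $\bs{U}$ and $\bs{u}$'' highlighted in the abstract. It remains to justify the passage to the $\limsup_{T\to\infty}$ averages---either by constructing a genuine time-periodic $\bs{u}$ for a time-periodic $\bs{U}_\nu$, or by propagating the estimates using the uniform-in-time $H^1$ bound guaranteed in the stated well-posedness class---and to verify that $\bs{U}_\nu$ is smooth and compatible with the Couette data \eqref{flow config: BC}. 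I expect the scale-by-scale flux bound in the presence of the nonlocal pressure coupling to be by far the decisive difficulty.
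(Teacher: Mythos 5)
Your reduction to the Reynolds-stress correlation is sound and matches the paper's starting point: averaging the $x_1$-momentum equation to get a constant total stress and the identity $\varepsilon_{\bs{u}} = \nu\,\ol{\partial_2 u_1} - \ol{U_2 u_1}$ is exactly \eqref{eqn: dissp for 3}, and your consistency check against Theorem \ref{thm: upper bound} is correct. But the heart of your argument --- the lower bound $-\int_{-1/2}^{1/2}\ol{U_2 u_1}\,{\rm d}x_2 \gtrsim (\log\nu^{-1})^{-2}$ --- is never actually proved. You assert it via a mixing-length closure $\ol{U_2 u_1}\sim -D(x_2)\,\partial_{x_2}\ol{u_1}$ with $D\sim V\dist(x_2,\partial\Omega)$, and you yourself flag that the Leray projector can decorrelate $u_1$ from $U_2$, deferring its control to an unspecified ``per-scale estimate.'' No off-the-shelf result supplies this: quantitative homogenization/eddy-diffusivity theorems of this type exist for passive \emph{scalars}, and the pressure is precisely what breaks that toolkit here, as the paper stresses. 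Your proposed rescue via the vorticity also fails as stated: since $\bs{u}\neq\bs{U}$, the curl of \eqref{eqn: 1 prb momentum} in two dimensions carries a source of the form $\partial_1 U_k\,\partial_k u_2 - \partial_2 U_k\,\partial_k u_1$, which involves the full gradient $\nabla\bs{u}$ (it cancels only when $\bs{U}=\bs{u}$, by incompressibility), so there is no maximum principle and no closed pressure-free equation for $\omega$ alone. The proposal therefore has a genuine gap at its decisive step.

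The paper closes exactly this step differently, by never asking the true solution to homogenize. It restricts to \emph{steady} $\bs{U}$ (so long-time averages reduce to the steady problem --- note your 3D chaotic or time-periodic eddies would forfeit this reduction, and are unnecessary at the logarithmic target), writes $\bs{u}=x_2\bs{e}_1+\ol{\bs{v}}+\wt{\bs{v}}$ via a symmetrization, and obtains the variational principle \eqref{var prin: variational principle}: for every divergence-free $\wt{\bs{v}}\in H^1_0(\Omega)$,
\begin{align}
\nu \langle |\nabla \bs{u}|^2 \rangle - \nu \;\geq\; - 2 \langle U_2 \wt{v}_1 \rangle \;-\; \nu \langle |\nabla \wt{\bs{v}}|^2 \rangle \;-\; \frac{1}{\nu} \bigl\langle |\nabla \Delta^{-1} \mathbb{P}\, \bs{U} \cdot \nabla \wt{\bs{v}}|^2 \bigr\rangle.
\end{align}
The flux term is then extracted \emph{by choice}, taking $\wt{\bs{v}}$ proportional to the branching component of $\bs{U}$ (a steady two-dimensional dyadic hierarchy of inclined cells with $\delta_i=2^{-2i}\delta_0$, $k_i=2^ik_0$, $n\sim\log\nu^{-1}$), which gives $-2\langle U_2\wt{v}_1\rangle\geq\mathfrak{a}\mathcal{A}/2$ exactly; the pressure's decorrelating effect appears only through the penalty term, which is estimated rigorously by writing $\bs{U}\cdot\nabla\wt{\bs{v}}$ explicitly (a mean part plus oscillatory modes supported on layer transitions), using that $\nabla\Delta^{-1}\mathbb{P}\div$ and $\nabla\Delta^{-1}\mathbb{P}\partial_{x_1}$ are zeroth-order Calder\'on--Zygmund operators, plus an elementary lemma for the mean part. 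So where you need an unproven dynamical mixing property of $\bs{u}$, the paper needs only explicit estimates on the designed fields --- that substitution is the missing idea in your proposal.
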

\begin{remark}
The proof of Theorem \ref{thm: upper bound} relies on the method by \cite{seis2015bound}. The result for values other than $\alpha = 0$ and $\beta = 0$ may not be sharp as it does not seem physical for the advected velocity field $\bs{u}$ to be dissipating more energy than the advecting velocity field $\bs{U}$. This raises the following question:
\end{remark}
\begin{question}[Improved upper bound]
If $\{\bs{U}_{\nu}\}_{\nu > 0}$ obeys $\nu \langle |\nabla \bs{U}_\nu|^2 \rangle \sim \nu^\alpha (\log \nu)^\beta$ with $\alpha$ and $\beta$ as given in (\ref{alpha beta choice}) then is it true that the corresponding solutions $\{\bs{u}_{\nu}\}_{\nu > 0}$ to equations (\ref{eqn: conv diff u}{\color{blue}a-b}) will obey $\nu \langle |\nabla \bs{u}_\nu|^2 \rangle \leq c \nu^{\alpha} (\log \nu)^\beta$ for some positive constant $c$?
\end{question}
The proof of Theorem \ref{thm: main thm} is based on a variational principle for the energy dissipation rate in the velocity field $\bs{u}$ derived in Section \ref{sec: var prin}, which is similar to variational approaches  recently used in the context of advection-diffusion in a scalar for problems related to optimal heat transport \citep{doering2019optimal, kumar2022three}. The main idea is to use a branching flow construction to a give a proof of Theorem \ref{thm: main thm}. However, we also provide a subptimal answer to Question \ref{enhanced dissp} using a construction based on convection rolls with $\alpha = 1/3$ and $\beta = 0$. This construction serves a dual purpose: it demonstrates the application of the variational principle (before going to the more complex branching flow construction) and is also physically motivated, as convection rolls are commonly observed in fluid dynamics and are effective in heat and momentum transfer. Consequently, this work may inspire further research into finding exact solutions of the Navier–Stokes equations based on convection rolls in a Couette flow configuration, both from mathematical analysis and computational side, as the convection rolls seem more tractable than the branching flows.


\subsection{Comparison with experiments and numerical observation}
The single most important aspect of our result in Theorem \ref{thm: main thm} is that the energy dissipation scaling $(\log \nu^{-1})^{-2}$ is the predictions from the prominent phenomenological theories of turbulent channel flow (or wall-bounded turbulence in general) and has been validated in experiments to a considerable degree. From the vast literature on this subject, we note only the most important references \citep{schlichting2016boundary, smits2011high, yakhot2010scaling, marusic2010wall}. Beyond the statistical property, our flow design in Theorem \ref{thm: main thm} shares several structural similarities. We illuminate on these two features in the paragraphs below.

The near anomalous dissipation scaling of the energy dissipation in a channel flow is a prediction of the so-called ``law of the wall'' or ``logarithmic law.'' This law asserts that the mean velocity of the flow near the wall scales logarithmically with the distance from the wall, a characteristic behavior observed in turbulent flows. It describes three distinct layers in turbulent flow: (i) the viscous layer (closest to the wall), (ii) the outer layer (further away from the wall), and (iii) the overlap layer. In the viscous layer, viscous effects dominate, and the velocity profile in this region is independent of the channel height. In the outer layer, the velocity is influenced only by the channel height and does not account for viscosity. Finally, in the overlap region, the velocity is independent of both viscosity and channel height, leading to a logarithmic velocity profile. A consequence of this logarithmic profile is that the energy dissipation scales as
\begin{align}
\nu \langle |\nabla \bs{u}|^2 \rangle \sim \frac{1}{4} \frac{\kappa^2}{(\log \nu^{-1})^2} \approx \frac{0.042}{(\log \nu^{-1})^2},
\label{drag coefficient Couette flow}
\end{align}
where $\kappa \approx 0.41$ is know as the ``von K\'arm\'an'' constant, a universal constant in wall-bounded turbulent flows \citep{schlichting2016boundary}. We caution the reader that the result in \eqref{drag coefficient Couette flow} is often stated in the engineering and physics literature in terms of the friction factor (which is related to energy dissipation) and the Reynolds number $Re = \nu^{-1} U_c H$, where $U_c$ is the velocity of the walls ($1/2$ in our case) and $H$ is the half channel width (also $1/2$ in our case).


The second aspect we wish to emphasize regarding our flow design in Theorem \ref{thm: main thm} is the self-similar nature of the construction, which is characterized by overlapping eddies and is crucial for achieving the logarithmic decay of energy dissipation. We note that the self-similar hierarchical structure is also prevalent in wall turbulence. In a seminal work, Townsend hypothesized that the flow within the log layer comprises a self-similar population of eddies of varying sizes that are attached to the wall \citep{townsend1976structure}. Such structures have been observed in direct numerical simulations (DNS) \citep{lozano2012three, hwang2015statistical}, numerically constructed invariant solutions \citep{yang2019exact}, resolvent analysis \citep{mckeon2019self}, and proper orthogonal decomposition of data from pipe flow \citep{hellstrom2016self}. Given the scaling results of our velocity field construction and the similarity in flow structures, it is evident that our construction is closely related to physically observed flows and is not merely a mathematical construct.

\subsection{Organization of the paper}
In Section \ref{sec: upper bound}, we provide a proof of Theorem \ref{thm: upper bound} by establishing an upper bound on the energy dissipation. Section \ref{sec: var prin} is dedicated to deriving a variational principle for the rate of energy dissipation. We present a construction based on convection rolls in Section \ref{sec: convection rolls}. Next, we prove Theorem \ref{thm: main thm} in Section \ref{sec: branching flows}, which relies on branching flows. Finally, we conclude with a discussion and future outlook in Section \ref{sec: conclusion}.

\begin{center}
\subsection*{Acknowledgement}
\end{center}
A.K. thanks Sergei Chernyshenko for insightful discussions on the law of the wall. A.K. also thanks Theodore Drivas and Vlad Vicol for valuable suggestions.

\vspace{0.5cm}
\section{Upper Bound}
\label{sec: upper bound}
As the velocity field $\bs{U} \in L^2([0, \infty); H^1(\Omega)) \cap L^\infty([0, \infty) \times \Omega)$. Therefore, there is a unique strong solution $\bs{u}$ of the equations (\ref{eqn: conv diff u}{\color{blue}a-b}). Taking the dot product of equation (\ref{eqn: 1 prb momentum}) with $\bs{u}$ then gives
\begin{align}
\frac{1}{2} \partial_t |\bs{u}|^2 + \frac{1}{2} \div(\bs{U} |\bs{u}|^2) = - \div(\bs{u} p) + \nu \sum_{i, j} \partial_j (u_i \partial_j u_i) - \nu |\nabla \bs{u}|^2.
\end{align}
Taking a volume average of above equation leads to
\begin{align}
\frac{1}{2} \frac{d}{dt} \dashint_{\Omega} |\bs{u}|^2 \, {\rm d} \bs{x} = \frac{\nu}{2} \dashint_{\mathbb{T}_{L_x} \times \mathbb{T}_{L_y}} \left[\left. \partial_2 u_1 \right|_{x_2=1/2} + \left. \partial_2 u_1 \right|_{x_2=-1/2} \right] {\rm d} \bs{s} - \nu \dashint_{\Omega} |\nabla \bs{u}|^2 \, {\rm d} \bs{x}.
\end{align}
Finally, noting that $\bs{u} \in L^\infty([0, \infty); L^2(\Omega))$, performing a long-time average leads to
\begin{align}
\nu \langle |\nabla \bs{u}|^2 \rangle = \frac{\nu}{2} \ol{\left. \partial_2 u_1 \right|_{x_2=1/2} } + \frac{\nu}{2} \ol{\left. \partial_2 u_1 \right|_{x_2=-1/2}}.
\label{eqn: dissp for 1}
\end{align}
By performing the long-time horizontal average of the $x_1$ component of equation (\ref{eqn: 1 prb momentum}) at any level $x_2$ gives 
\begin{align}
\nu \ol{\partial_2 u_1} - \ol{U_2 u_1} = const \quad \text{for all } x_2 \in [-1/2, 1/2].
\label{eqn: dissp for 2}
\end{align}
Combining (\ref{eqn: dissp for 1}) and (\ref{eqn: dissp for 2}), we get
\begin{align}
\nu \langle |\nabla \bs{u}|^2 \rangle = \nu \ol{\partial_2 u_1} - \ol{U_2 u_1}  \quad \text{for all } x_2 \in [-1/2, 1/2].
\label{eqn: dissp for 3}
\end{align}
Therefore, the equality also holds when the right-hand side in (\ref{eqn: dissp for 3}) is replaced by its average over the layer $$\Gamma_{\delta} \coloneqq [-1/2, -1/2 + \delta]$$ of thickness $\delta$ near the bottom wall leading to
\begin{align}
\nu \langle |\nabla \bs{u}|^2 \rangle & = \nu \ol{\dashint_{ \Gamma_\delta}\partial_2 u_1}  - \ol{\dashint_{\Gamma_\delta} U_2 u_1 }  \qquad \text{where} \quad \dashint_{\Gamma_{\delta}} [\, \cdot \,] \coloneqq \frac{1}{\delta}\int_{x_2 \in \Gamma_\delta} [\, \cdot \,] \, {\rm} d x_2. 
\label{eqn: dissp for 4}
\end{align}
Next, we have the following estimate:
\begin{align}
\left| \, \ol{\dashint_{ \Gamma_\delta}\partial_2 u_1} \, \right| \leq \left(\, \ol{ \dashint_{\Gamma_\delta} (\partial_2 u_1)^2 } \, \right)^{\frac{1}{2}} \leq  \nu^{-1/2} \delta^{-1/2} (\varepsilon_{\bs{u}})^{\frac{1}{2}}.
\label{eqn: dissp est 1}
\end{align}
We also have
\begin{align}
\left| \ol{\dashint_{\Gamma_\delta} U_2 u_1} \right| 
& \leq \left( \, \ol{\dashint_{\Gamma_\delta} U_2^2 } \, \right)^{\frac{1}{2}} \left( \, \ol{ \dashint_{\Gamma_\delta} u_1^2 } \, \right)^{\frac{1}{2}} \nonumber \\
& \leq \delta^2 \left( \, \ol{ \dashint_{\Gamma_\delta} (\partial_2 U_2)^2 \, }\right)^{\frac{1}{2}} \left(\, \ol{ \dashint_{\Gamma_\delta} (\partial_2 u_1)^2 } \, \right)^{\frac{1}{2}} \nonumber \\
& \leq  \delta \nu^{-1} (\varepsilon_{\bs{U}})^{\frac{1}{2}} (\varepsilon_{\bs{u}})^{\frac{1}{2}}.
\label{eqn: dissp est 2}
\end{align}
Combining (\ref{eqn: dissp for 4}) with (\ref{eqn: dissp est 1}) and (\ref{eqn: dissp est 2}) leads to
\begin{align}
& \varepsilon_{\bs{u}} \leq \nu^{1/2} \delta^{-1/2}  (\varepsilon_{\bs{u}})^{\frac{1}{2}} + \delta \nu^{-1} (\varepsilon_{\bs{U}})^{\frac{1}{2}} (\varepsilon_{\bs{u}})^{\frac{1}{2}} \nonumber \\
\implies & \varepsilon_{\bs{u}} \leq \left(\nu^{1/2} \delta^{-1/2}  + \delta \nu^{-1} (\varepsilon_{\bs{U}})^{\frac{1}{2}} \right)^2.
\end{align}
Finally, selecting $$\delta = \frac{\nu}{(\varepsilon_{\bs{U}})^{\frac{1}{3}}}$$
gives
\begin{align}
\varepsilon_{\bs{u}} \leq 4 (\varepsilon_{\bs{U}})^{\frac{1}{3}}. 
\end{align}
\section{Variational principle for the energy dissipation rate}
\label{sec: var prin}
To prove Theorem \ref{thm: main thm}, we will design a family of time-independent divergence-free velocity field $\{\bs{U}_{\nu}\}_{\nu > 0}$. As a result, in long-time averages of the solutions of (\ref{eqn: conv diff u}), any dependence on the initial data $\bs{u}_0$ is ultimately lost, and the long-time averages depend solely on the solutions of the corresponding steady equations:
\begin{subequations}
\begin{align}
& \nabla \cdot \bs{u} = 0, 
\label{eqn: steady div free}\\
 & \bs{U} \cdot \nabla \bs{u} = - \nabla p + \nu \Delta \bs{u}.
\label{eqn: steady momentum}
\end{align}
\label{eqn: steady conv diff u}
\end{subequations}
When the choice of $\bs{U}$ involves a combination of translation and rigid body motion, the resulting equations are known as Oseen's equations \citep{galdi2011introduction}. However, in our study, we do not limit $\bs{U}$ to just these two forms. The construction that follows in the next two sections, the design of the velocity fields $\{\bs{U}_{\nu}\}_{\nu > 0}$ is two dimensional. 
Because of this simplification, from here onward, without loss of generality, our domain will be
\begin{align}
\Omega = \mathbb{T}_{L_1} \times [-1/2, 1/2],
\end{align}
and the angle brackets will only mean the volume averages
\begin{align}
\langle [\, \cdot \,] \rangle = \dashint_{\Omega} [\, \cdot \,] \, {\rm d} \bs{x}.
\end{align}
The velocity fields $\bs{U}$ and $\bs{u}$ satisfy the same boundary conditions on the top and bottom walls
\begin{align}
\bs{U}(x_1, -1/2) = \bs{u}(x_1, -1/2) = -\bs{e}_1/2, \qquad \bs{U}(x_1, 1/2) = \bs{u}(x_1, 1/2) = \bs{e}_1/2,
\label{flow config: steady BC}
\end{align}
for all $x_1 \in \mathbb{T}_{L_1}$.
\begin{remark}
From the broader perspective of the time-dependent advection-diffusion equation, the velocity field $\bs{U}$ may not initially satisfy the given conditions. To resolve this  discrepancy, the velocity field $\bs{U}$ is designed to smoothly transition from the initial condition over the first unit of time. After this initial period, $\bs{U}$ assumes the desired autonomous form.
\end{remark}
We assume in rest of the section we assume that $\bs{U} \in L^\infty(\Omega)$. Therefore, there is a unique weak solution $\bs{u} \in H^1(\Omega)$ to (\ref{eqn: steady conv diff u}) satisfying the boundary conditions (\ref{flow config: steady BC}). Also, we denote the Leary projector by $\mathbb{P}$ and the $\Delta^{-1}$ to be the inverse Laplace operator with respect to homogeneous boundary conditions on the walls $x_2 = \pm \frac{1}{2}$.
\begin{proposition}[Variational principle for energy dissipation rate]
Let $\bs{U} \in L^\infty(\Omega)$ be a weakly divergence-free vector field. Then the energy dissipation in the solution $\bs{u}$ of (\ref{eqn: steady conv diff u}) satisfies
\begin{align}
\nu \langle |\nabla \bs{u}|^2 \rangle - \nu = \max_{\substack{\wt{\bs{v}} \in H^1_0(\Omega) \\ \div \wt{\bs{v}} = 0}} \left(- 2 \langle U_2 \wt{v}_1 \rangle  - \nu \langle |\nabla \wt{\bs{v}}|^2 \rangle - \frac{1}{\nu} \langle |\nabla \Delta^{-1} \mathbb{P} \bs{U} \cdot \nabla \wt{\bs{v}}|^2 \rangle\right).
\label{var prin: variational principle}
\end{align}
\end{proposition}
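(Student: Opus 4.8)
The plan is to peel off the laminar Couette profile. I would write $\bs{u}=\bs{u}_s+\bs{v}$ with $\bs{u}_s:=x_2\,\bs{e}_1$, which is divergence-free, carries the boundary data \eqref{flow config: steady BC}, and satisfies $\Delta\bs{u}_s=0$; hence $\bs{v}\in H:=\{\bs{\psi}\in H^1_0(\Omega):\div\bs{\psi}=0\}$. Since $v_1$ vanishes on both walls, the cross term $\langle\nabla\bs{u}_s:\nabla\bs{v}\rangle=\langle\partial_2 v_1\rangle$ integrates to zero and $\langle|\nabla\bs{u}_s|^2\rangle=1$, so that $\nu\langle|\nabla\bs{u}|^2\rangle-\nu=\nu\langle|\nabla\bs{v}|^2\rangle=:D$. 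Subtracting the background from \eqref{eqn: steady momentum} and using $\bs{U}\cdot\nabla\bs{u}_s=U_2\,\bs{e}_1$, the fluctuation solves weakly, for every $\bs{\phi}\in H$,
\[
\nu\langle\nabla\bs{v}:\nabla\bs{\phi}\rangle+\langle(\bs{U}\cdot\nabla\bs{v})\cdot\bs{\phi}\rangle+\langle U_2\,\phi_1\rangle=0 .
\]
Testing with $\bs{\phi}=\bs{v}$, where $\langle(\bs{U}\cdot\nabla\bs{v})\cdot\bs{v}\rangle=0$ by skew-symmetry of advection on $H$, gives the key identity $D=-\langle U_2 v_1\rangle$.

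\textbf{Step 2: recast as a concave quadratic on $H$.} Next I would equip $H$ with the inner product $s(\bs{a},\bs{b}):=\langle\nabla\bs{a}:\nabla\bs{b}\rangle$ and introduce the operator $\mathcal{A}:H\to H$ defined by $s(\mathcal{A}\bs{\psi},\bs{\phi})=\langle(\bs{U}\cdot\nabla\bs{\psi})\cdot\bs{\phi}\rangle$. It is bounded (since $\bs{U}\in L^\infty$, with Poincar\'e) and skew-adjoint, $\mathcal{A}^*=-\mathcal{A}$, because advection is skew on $H$. One then identifies $\Delta^{-1}\mathbb{P}(\bs{U}\cdot\nabla\wt{\bs{v}})=-\mathcal{A}\wt{\bs{v}}$, so the penalty term equals $\tfrac1\nu\|\mathcal{A}\wt{\bs{v}}\|_s^2$ with $\|\cdot\|_s^2=s(\cdot,\cdot)$. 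Writing the Riesz representative $\bs{\xi}\in H$ of the load, $s(\bs{\xi},\bs{\phi})=\langle U_2\phi_1\rangle$, the fluctuation equation becomes $(\nu I+\mathcal{A})\bs{v}=-\bs{\xi}$, the identity of Step 1 reads $D=-s(\bs{\xi},\bs{v})$, and the right-hand side of \eqref{var prin: variational principle} takes the form
\[
\mathcal{F}(\wt{\bs{v}})=-2\,s(\bs{\xi},\wt{\bs{v}})-\nu\|\wt{\bs{v}}\|_s^2-\tfrac1\nu\|\mathcal{A}\wt{\bs{v}}\|_s^2 .
\]
Because $\nu\|\wt{\bs{v}}\|_s^2+\tfrac1\nu\|\mathcal{A}\wt{\bs{v}}\|_s^2=s\big((\nu I-\tfrac1\nu\mathcal{A}^2)\wt{\bs{v}},\wt{\bs{v}}\big)$ and $-\mathcal{A}^2=\mathcal{A}^*\mathcal{A}\ge0$, the quadratic part is coercive ($\nu I-\tfrac1\nu\mathcal{A}^2\ge\nu I$ as a form); thus $\mathcal{F}$ is strictly concave with a unique maximizer $\wt{\bs{v}}^*$ solving the Euler--Lagrange equation $(\nu I-\tfrac1\nu\mathcal{A}^2)\wt{\bs{v}}^*=-\bs{\xi}$, and $\max\mathcal{F}=-s(\bs{\xi},\wt{\bs{v}}^*)$.

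\textbf{Step 3: match the maximal value to $D$ via forward--adjoint symmetry.} The heart of the argument is to show $-s(\bs{\xi},\wt{\bs{v}}^*)=D$. I would use the commuting factorization $\nu I-\tfrac1\nu\mathcal{A}^2=\tfrac1\nu(\nu I-\mathcal{A})(\nu I+\mathcal{A})$ and the resolvent identity $(\nu I+\mathcal{A})^{-1}+(\nu I-\mathcal{A})^{-1}=2\nu(\nu^2 I-\mathcal{A}^2)^{-1}$. Letting $\bs{v}^\dagger$ solve the reversed-advection problem $(\nu I-\mathcal{A})\bs{v}^\dagger=-\bs{\xi}$, these give $\wt{\bs{v}}^*=\tfrac12(\bs{v}+\bs{v}^\dagger)$: the maximizer is the half-sum of the forward and time-reversed solutions, whence $\max\mathcal{F}=\tfrac12(D+D^\dagger)$ with $D^\dagger:=-s(\bs{\xi},\bs{v}^\dagger)=\nu\|\bs{v}^\dagger\|_s^2$. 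Finally, skew-adjointness yields $s(\bs{\xi},(\nu I+\mathcal{A})^{-1}\bs{\xi})=s(\bs{\xi},(\nu I-\mathcal{A})^{-1}\bs{\xi})$ (take adjoints and use that the pairing is real and symmetric), i.e. $D=D^\dagger$: reversing the advecting field leaves the dissipation unchanged. Hence $\max\mathcal{F}=D=\nu\langle|\nabla\bs{u}|^2\rangle-\nu$, which is the assertion.

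\textbf{Main obstacle.} The delicate point is the operator identification $\Delta^{-1}\mathbb{P}(\bs{U}\cdot\nabla\wt{\bs{v}})=-\mathcal{A}\wt{\bs{v}}$ behind the third term: it requires reconciling the Leray projection, the Dirichlet inverse Laplacian, and the divergence-free constraint. Concretely, one must verify that $\Delta^{-1}\mathbb{P}(\bs{U}\cdot\nabla\wt{\bs{v}})$ is (or may be replaced by) a solenoidal field, so that its $\langle|\nabla\cdot|^2\rangle$ equals $\|\mathcal{A}\wt{\bs{v}}\|_s^2$ rather than merely dominating it; the discrepancy is a pressure/gradient contribution that vanishes only under the correct realization of $\Delta^{-1}\mathbb{P}$ on $H$ (essentially the inverse Stokes operator), and this is precisely where the boundary conditions enter. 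Once this identification, the boundedness and skew-adjointness of $\mathcal{A}$, and the integrations by parts are justified from $\bs{u}\in H^1(\Omega)$ and $\bs{U}\in L^\infty(\Omega)$, the remaining resolvent algebra is routine.
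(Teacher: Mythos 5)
Your proof is correct, and after the shared opening step it follows a genuinely different route from the paper's. Your Step 1 is identical to the paper: the same background decomposition $\bs{u}=x_2\bs{e}_1+\bs{v}$ and the same energy identity $\nu\langle|\nabla\bs{v}|^2\rangle=-\langle U_2v_1\rangle$. From there the paper works entirely at the PDE level: it introduces the coupled Stokes-type system (\ref{var prin: v overbar})--(\ref{var prin: v tilde}) for a pair $(\wt{\bs{v}},\ol{\bs{v}})$ with $\bs{v}=\ol{\bs{v}}+\wt{\bs{v}}$, extracts the orthogonality relations $\langle\nabla\ol{\bs{v}}\colon\nabla\wt{\bs{v}}\rangle=0$ and $\langle U_2\ol{v}_1\rangle=0$ by cross-testing the two equations, combines the two resulting formulas for the dissipation to show that $F$ evaluated at this particular $\wt{\bs{v}}$ equals $\nu\langle|\nabla\bs{u}|^2\rangle-\nu$, and closes by noting that the Euler--Lagrange equation of the concave functional $F$ is exactly (\ref{var prin: v overbar mom}). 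You instead package the advection into the skew-adjoint operator $\mathcal{A}$, solve the Euler--Lagrange equation by coercivity of $\nu I-\tfrac1\nu\mathcal{A}^2\ge\nu I$, and match the maximal value to the dissipation via the resolvent identity and the reversal symmetry $D=D^\dagger$. These are two presentations of one underlying structure: in your operator notation the paper's system reads $\nu\wt{\bs{v}}+\mathcal{A}\ol{\bs{v}}=-\bs{\xi}$ and $\nu\ol{\bs{v}}+\mathcal{A}\wt{\bs{v}}=0$, so adding and subtracting shows the paper's $(\wt{\bs{v}},\ol{\bs{v}})$ are precisely your $\bigl(\tfrac12(\bs{v}+\bs{v}^\dagger),\tfrac12(\bs{v}-\bs{v}^\dagger)\bigr)$, the symmetric and antisymmetric parts of $\bs{v}$ under $\mathcal{A}\mapsto-\mathcal{A}$; your half-sum identity is the concrete content of what the paper calls a ``standard symmetrization procedure.'' Your route buys clean existence and uniqueness for free (Lax--Milgram for every operator that appears) and makes transparent the mechanism $D=D^\dagger$ behind the factor of $2$ in the functional; the paper's route buys a self-contained PDE argument whose intermediate objects $\ol{\bs{v}}$ and $\wt{\bs{v}}$ are the ones actually estimated later in Sections \ref{sec: convection rolls} and \ref{sec: branching flows}. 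Finally, the ``main obstacle'' you flag---realizing $\Delta^{-1}\mathbb{P}(\bs{U}\cdot\nabla\wt{\bs{v}})$ as the inverse Stokes operator on the solenoidal space so that the penalty term equals $\norm{\mathcal{A}\wt{\bs{v}}}_s^2$ exactly rather than merely dominating it---is not an additional hypothesis on your part: the paper makes the identical, silent identification when it equates $\nu\langle|\nabla\ol{\bs{v}}|^2\rangle$ with $\tfrac1\nu\langle|\nabla\Delta^{-1}\mathbb{P}\bs{U}\cdot\nabla\wt{\bs{v}}|^2\rangle$ using (\ref{var prin: v tilde mom}), so your reading matches the paper's intended meaning of $\Delta^{-1}\mathbb{P}$ and no gap remains in either argument relative to the other.
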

\begin{proof}
We decompose the velocity field $\bs{u}$ into a linear part and a homogeneous part:
\begin{align}
\bs{u} = x_2 \bs{e}_1 + \bs{v}.
\label{var prin: decomposition of u}
\end{align}
Using (\ref{var prin: decomposition of u}) in (\ref{eqn: steady conv diff u}) and (\ref{flow config: steady BC}), it is clear that $\bs{v}$ satisfies
\begin{subequations}
\begin{align}
& \nabla \cdot \bs{v} = 0, 
\label{var prin: eqn for v div-free}
\\
& U_2 \bs{e}_1 + \bs{U} \cdot \nabla \bs{v} = - \nabla p +  \nu \Delta \bs{v}.
\label{var prin: eqn for v mom}
\end{align}
\label{var prin: eqn for v}
\end{subequations}
along with the homogeneous version of the boundary conditions.
From the decomposition (\ref{var prin: decomposition of u}), we observe that 
\begin{align}
\nu \langle |\nabla \bs{u}|^2 \rangle = \nu \langle |\nabla \bs{v}|^2 \rangle + \nu.
\label{var prin: dissp formula 1}
\end{align}
Also, taking the dot product of (\ref{var prin: eqn for v mom}) with $\bs{v}$ and performing a volume average leads to 
\begin{align}
\nu \langle |\nabla \bs{v}|^2 \rangle =  - \langle U_2 v_1 \rangle
\end{align}
Therefore, we can also write
\begin{align}
\nu \langle |\nabla \bs{u}|^2 \rangle = - \langle U_2 v_1 \rangle + \nu.
\label{var prin: dissp formula 2}
\end{align}
Next, consider two sets of PDEs,
\begin{subequations}
\begin{align}
& \nabla \cdot \wt{\bs{v}} = 0, \\
& U_2 \bs{e}_1 + \bs{U} \cdot \nabla \ol{\bs{v}} = - \nabla \wt{p} + \nu \Delta \wt{\bs{v}},
\label{var prin: v overbar mom}
\end{align}
\label{var prin: v overbar}
\end{subequations}
and
\begin{subequations}
\begin{align}
& \nabla \cdot \ol{\bs{v}} = 0, \\
& \bs{U} \cdot \nabla \wt{\bs{v}} = - \nabla \ol{p} + \nu \Delta \ol{\bs{v}}, 
\label{var prin: v tilde mom}
\end{align}
\label{var prin: v tilde}
\end{subequations}
where $\ol{\bs{v}}, \wt{\bs{v}}: \Omega \to \mathbb{R}^2$ satisfies the zero velocity boundary conditions, i.e.,
\begin{align}
\ol{\bs{v}}, \; \wt{\bs{v}} = \bs{0}, \qquad x_2 = \pm \frac{1}{2}.
\end{align}
This is a standard symmetrization procedure previously used in the studies of optimal heat transport \citep{doering2019optimal}. Now, from the standard existence uniqueness theory, there are unique solutions $\wt{\bs{v}}, \ol{\bs{v}} \in H^1_0(\Omega)$ to PDEs (\ref{var prin: v overbar}) and (\ref{var prin: v tilde}). Moreover, it is clear that we can write our solution $\bs{v}$ to equations (\ref{var prin: eqn for v}) as
\begin{align}
\bs{v} = \ol{\bs{v}} + \wt{\bs{v}}.
\label{var prin: v vtilde vbar}
\end{align}
Taking the dot product of equation (\ref{var prin: v tilde mom}) with $\wt{\bs{v}}$ leads to 
\begin{align}
\langle \nabla \ol{\bs{v}} \colon \nabla \wt{\bs{v}} \rangle = 0,
\label{var prin: ortho v over tilde}
\end{align}
where in the index notation, we have $\nabla \ol{\bs{v}} \colon \nabla \wt{\bs{v}} = \partial_j \ol{v}_i \partial_j \wt{v}_i$.
Next, we take the dot product of the equation (\ref{var prin: v overbar mom}) with $\ol{\bs{v}}$ and integrate which leads to 
\begin{align}
\langle U_2 \ol{v}_1 \rangle = 0.
\label{var prin: transport overbar zero}
\end{align} 
Substituting (\ref{var prin: v vtilde vbar}) in (\ref{var prin: dissp formula 1}) and then using (\ref{var prin: ortho v over tilde}) leads to 
\begin{align}
\nu \langle |\nabla \bs{u}|^2 \rangle & = \nu \langle |\nabla \wt{\bs{v}}|^2 \rangle  + \nu \langle |\nabla \ol{\bs{v}}|^2 \rangle + \nu \nonumber \\
& = \nu \langle |\nabla \wt{\bs{v}}|^2 \rangle + \frac{1}{\nu} \langle |\nabla \Delta^{-1} \mathbb{P} \bs{U} \cdot \nabla \wt{\bs{v}}|^2 \rangle  + \nu.
\label{var prin: dissp formula 3}
\end{align}
 Using (\ref{var prin: transport overbar zero}) in (\ref{var prin: dissp formula 2}) gives
\begin{align}
\nu \langle |\nabla \bs{u}|^2 \rangle = - \langle U_2 \wt{v}_1 \rangle + \nu.
\label{var prin: dissp formula 4}
\end{align}
Finally, multiplying (\ref{var prin: dissp formula 4}) with two and then subtracting (\ref{var prin: dissp formula 3}) yields
\begin{align}
\nu \langle |\nabla \bs{u}|^2 \rangle - \nu = F(\wt{\bs{v}}),
\end{align}
where
\begin{align}
F(\wt{\bs{v}}) \coloneqq - 2 \langle U_2 \wt{v}_1 \rangle  - \nu \langle |\nabla \wt{\bs{v}}|^2 \rangle - \frac{1}{\nu} \langle |\nabla \Delta^{-1} \mathbb{P} \bs{U} \cdot \nabla \wt{\bs{v}}|^2 \rangle.  
\end{align}
Next, we consider the following variational problem:
\begin{align}
\sup_{\substack{\wt{\bs{v}} \in H^1_0(\Omega) \\ \div \wt{\bs{v}} = 0}} F(\wt{\bs{v}}).
\end{align}
We see that $F$ is concave so it has a unique maximizer. Moreover, the maximizer $\wt{\bs{v}}$ solves the Euler--Lagrange equation
\begin{align}
U_2 \bs{e}_1 + \frac{1}{\nu} \bs{U} \cdot \nabla \Delta^{-1} (\mathbb{P} \bs{U} \cdot \nabla \wt{\bs{v}}) = - \nabla q +  \nu \Delta \wt{\bs{v}}
\end{align}
which is basically the PDE (\ref{var prin: v overbar mom}). Therefore, the variational principle (\ref{var prin: variational principle}) follows. Finally, note that one could have also used a method based on Lagrange multiplier to derive this variational principle. See, for example, \citep{song2023bounds} in the context of heat transfer.
\end{proof}
\section{Convection Rolls Based Design}
\label{sec: convection rolls}
In this section, we demonstrate the existence of a family of divergence-free velocity fields $\{\bs{U}_\nu\}_{\nu > 0}$ with energy dissipation scaling as $\nu \langle |\nabla \bs{U}_{\nu}|^2 \rangle \sim \nu^{1/3}$, such that for the corresponding passive vector fields, the energy dissipation scales at least as $\nu \langle |\nabla \bs{u}_{\nu}|^2 \rangle \gtrsim \nu^{1/3}$. For brevity, we will omit $\nu$ from the subscript in the remainder of this section. The first step involves constructing a candidate for the convecting velocity field $\bs{U}$.
Once $\bs{U}$ is constructed, we turn to the variational principle (\ref{var prin: variational principle}) derived in the previous section to obtain a bound on the energy dissipation in $\bs{u}$. Therefore, the second step consists of making a `good choice' of the divergence-free test vector field $\wt{\bs{v}} \in H^1_0(\Omega)$, which then provides the following bound
\begin{align}
\nu \langle |\nabla \bs{u}|^2 \rangle - \nu \geq I + II + III,
\label{proof main thm: unknown lower bound}
\end{align}
where
\begin{align}
I \coloneqq - 2 \langle U_2 \wt{v}_1 \rangle, \qquad II \coloneqq  - \nu \langle |\nabla \wt{\bs{v}}|^2 \rangle, \qquad III \coloneqq - \frac{1}{\nu} \langle |\nabla \Delta^{-1} \mathbb{P} \bs{U} \cdot \nabla \wt{\bs{v}}|^2 \rangle.
\label{proof main thm: I II III}
\end{align}
Finally, the last task is to estimate all the terms $I$, $II$ and $III$.
\subsection{Choice of the velocity field $\bs{U}$}
Our choice of the velocity field  $\bs{U}$ is the sum of two divergence-free velocity fields:
\begin{align}
\bs{U} = \bs{U}^m + \bs{U}^c.
\end{align} 
The velocity field $\bs{U}^m$ is a unidirectional flow in the $x_1$ direction that satisfies the same boundary conditions as $\bs{U}$. This velocity field can be thought of as a mean flow that is flat in the middle of the domain and has sharp gradients near the top and bottom boundaries. The velocity field $\bs{U}^c$  satisfies the homogeneous version of the boundary conditions and consists of convection rolls inclined at an angle of $3\pi/4$ from the $x_1$ direction. Constructions based on vertical convection rolls have previously been used in studies of optimal heat transport \citep{iyer2021bounds, souza2020wall}. The three free parameters in our choice of $\bs{U}$ are $\delta$, the boundary layer thickness, $k$, the horizontal wavenumber of the rolls, and an amplitude $\mathcal{A}$. These parameters are chosen based on the viscosity $\nu$.
 \begin{figure}
\centering
 \includegraphics[scale = 0.8]{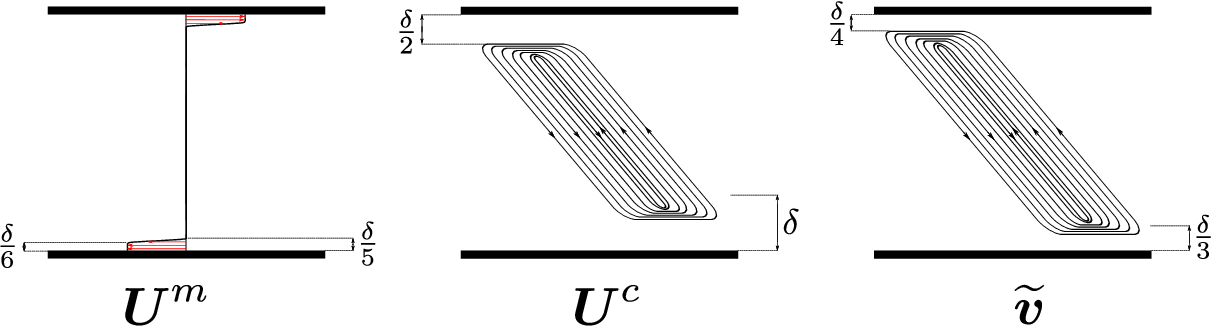}
 \caption{illustrates the velocity fields $\bs{U}^m$, $\bs{U}^c$ and $\wt{\bs{v}}$ defined in  (\ref{def Um}), (\ref{def Uc}) and (\ref{def v tilde}), respectively. The two important things to notice are as follows: (1) the support of $\nabla \bs{U}^m$, the region where rolls in $\bs{U}^c$ and $\wt{\bs{v}}$ folds are disjoint, (2) the rolls in $\wt{\bs{v}}$ are taller than the those in $\bs{U}^c$.}
 \label{fig: velocity}
\end{figure}

To construct the velocity field $\bs{U}$, we start by defining a cutoff function. Given two numbers  $\delta_1$ and $\delta_2$ such that $0 < \delta_1 < \delta_2 < 1/4$, we consider a cutoff function $\chi(\cdot; \delta_1, \delta_2): [-1/2, 1/2] \to [0, 1]$ such that
\begin{align}
\chi(x; \delta_1, \delta_2) = 
\begin{cases}
1 \qquad \text{if} \quad x \in \left[-\frac{1}{2} + \delta_2, \frac{1}{2} - \delta_2 \right], \\[5pt]
0 \qquad \text{if} \quad x \in \left[-\frac{1}{2}, -\frac{1}{2} + \delta_1 \right] \cup \left[\frac{1}{2} - \delta_1, \frac{1}{2} \right],
\end{cases}
\end{align}
and it satisfies the following estimates:
\begin{align}
\norm{\nabla^i \chi(\cdot; \delta_1, \delta_2)}_{L^\infty} \leq C_i \max\left\{\frac{1}{\delta_1^i}, \, \frac{1}{(\delta_2 - \delta_1)^i}\right\} \qquad \forall i \in \mathbb{N}, 
\end{align}
where $C_i$ are constants independent of $\delta_1$ and $\delta_2$.

Next, for some $\delta \in (0, 1/4)$, we define our mean velocity field $\bs{U}^{m}: \Omega \to \mathbb{R}^2$ as
\begin{align}
\bs{U}^m(\bs{x}) \coloneqq \ol{U}^m(x_2) \bs{e}_1,
\label{def Um}
\end{align}
where $\bs{e}_1$ is the unit vector in the $x_1$ direction and
\begin{align}
\ol{U}^m(x_2) = -\frac{1}{2} + \frac{1}{2} \chi\left(x_2; \frac{\delta}{6}, \frac{\delta}{5}\right) \quad \text{when } x_2 < 0 \quad \text{and} \quad \ol{U}^m(x_2) = \frac{1}{2} - \frac{1}{2} \chi\left(x_2; \frac{\delta}{6}, \frac{\delta}{5}\right) \quad \text{when } x_2 \geq 0. 
\label{mean velocity}
\end{align}




We define the convection roll velocity field $\bs{U}^c: \Omega \to \mathbb{R}^2$ with the help of a streamfunction. We first define $\Psi^c: \Omega \to \mathbb{R}$ as
\begin{align}
\Psi^c(\bs{x}) \coloneqq - \mathcal{A} \; \Xi(x_2) \; \frac{\sin(k(x_1 + x_2))}{k}, \quad \text{where} \quad 
\Xi(x) \coloneq \chi\left(x; \frac{\delta}{2}, {\delta}\right), \quad \text{and} \quad k \in \frac{2 \pi}{L_1} \mathbb{N}.
\end{align}
Using this streamfunction, we define
\begin{align}
\bs{U}^c & \coloneq \nabla^\perp \Psi^c \nonumber \\
& = \left( \mathcal{A}  \cos(k(x_1 + x_2)) \; \Xi(x_2) +  \mathcal{A} k^{-1} \sin(k(x_1 + x_2)) \; \Xi^\prime(x_2), \; -  \mathcal{A}  \cos(k(x_1 + x_2)) \; \Xi(x_2)\right),
\label{def Uc}
\end{align}
where $\nabla^\perp = (-\partial_{x_2}, \partial_{x_1})$.
A simple computation shows that the following estimates hold:
\begin{align}
 C_{l} \frac{\nu}{\delta} \leq \nu \langle |\nabla \bs{U}^m|^2 \rangle \leq  C_{u} \frac{\nu}{\delta}.
 \label{est Um}
\end{align}
and 
\begin{align}
\nu \langle |\nabla \bs{U}^c|^2 \rangle \leq C_c \nu \left( \mathcal{A}^2 k^2 + \frac{\mathcal{A}^2}{\delta} + \frac{\mathcal{A}^2}{k^2 \delta^3}\right), 
\label{est Uc}
\end{align}
for some positive constants $C_l$, $C_u$ and $C_c$ independent of any parameters and $\nu$. 
Finally, noting that $\supp \nabla \bs{U}^m \cap \supp \nabla \bs{U}^c = \emptyset$, we have $\nu \langle |\nabla \bs{U}|^2 \rangle =  \nu \langle |\nabla \bs{U}^m|^2 \rangle +  \nu \langle |\nabla \bs{U}^c|^2  \rangle$. Therefore, using (\ref{est Um}) and (\ref{est Uc}), we get the following upper and lower bound:
\begin{align}
\frac{C_{l}}{2} \frac{\nu}{\delta} \leq \nu \langle |\nabla \bs{U}|^2 \rangle \leq C \nu \left(\frac{1}{\delta} + \mathcal{A}^2 k^2 + \frac{\mathcal{A}^2}{\delta} + \frac{\mathcal{A}^2}{k^2 \delta^3}\right), 
\label{upper lower bound dissp U}
\end{align}
where $C$ is a constant independent of any parameter and the viscosity $\nu$.


\subsection{Choice of the velocity field $\wt{\bs{v}}$}
Once we have made our choice of the vector field $\bs{U}$, we want to obtain a lower bound on the dissipation in the vector field $\bs{u}$ using the variational principle (\ref{var prin: variational principle}). We define a divergence-free test velocity field $\wt{\bs{v}}$ which also consists of convection rolls. We first define a streamfunction $\wt{\psi}: \Omega \to \mathbb{R}$ as
\begin{align}
\wt{\psi}(\bs{x}) \coloneqq - \mathfrak{a}  \; \xi(x_2) \; \frac{\sin(k(x_1 + x_2))}{k}, \qquad \text{where} \qquad
\xi(x) = \chi\left(x; \frac{\delta}{4}, \frac{\delta}{3}\right).
\end{align}
Using this streamfunction, we define the velocity field $\wt{\bs{v}}$ as
\begin{align}
\wt{\bs{v}} & \coloneq \nabla^\perp \wt{\psi} \nonumber \\
& = \left( \mathfrak{a}  \cos(k(x_1 + x_2)) \; \xi(x_2) +  \mathfrak{a} k^{-1} \sin(k(x_1 + x_2)) \; \xi^\prime(x_2), \; -  \mathfrak{a}  \cos(k(x_1 + x_2)) \; \xi(x_2)\right).
\label{def v tilde}
\end{align}
We note that for our choices $\supp (\ol{U}^m)^\prime$, $\supp \Xi^\prime$ and $\supp \xi^\prime$ are pairwise disjoint. Consequently, we have
\begin{align}
\bs{U}^m = \bs{0} \qquad \text{when} \quad \bs{x} \in \supp \wt{\bs{v}}
\label{Um in supp v tilde}
\end{align}
and
\begin{align}
\wt{\bs{v}}  = \left( \mathfrak{a} \cos(k(x_1 + x_2)), \; -  \mathfrak{a} \cos(k(x_1 + x_2)) \right) \qquad \text{when} \quad \bs{x} \in \supp \bs{U}^c.
\label{v tilde in supp of Uc}
\end{align}

\subsection{Estimate on terms $I$, $II$ and $III$}
With definitions of $\bs{U}$ and $\wt{\bs{v}}$ in hand, we begin to estimate various terms in the lower bound (\ref{proof main thm: unknown lower bound}). The estimates on $I$ and $II$ are straightforward:
\begin{align}
I & = - 2 \langle U_2 \wt{v}_1 \rangle  = 2 \mathfrak{a} \mathcal{A}  \, \langle \, \cos^2(k(x_1+x_2)) \Xi(x_2) \, \rangle 
\end{align}
Noting that $\delta < \frac{1}{4}$, we get
\begin{align}
I \geq 2 \mathfrak{a} \mathcal{A}  \, \frac{1}{L} \int_{-\frac{1}{4}}^{\frac{1}{4}} \int_0^L \cos^2(k(x_1+x_2)) \, {\rm d} x_1 \, {\rm d} x_2 = \frac{\mathfrak{a} \mathcal{A}}{2}. 
\label{est I}
\end{align}
The estimate on the term $II$ is also a direct computation.
\begin{align}
II = - \nu \langle |\nabla \wt{\bs{v}}|^2 \rangle \geq - C_2 \nu (\mathfrak{a}^2 k^2 + \mathfrak{a}^2 \delta^{-1} + \mathfrak{a}^2 k^{-2} \delta^{-3}).
\label{est II}
\end{align}
The estimate on the $III$ term is simple as well but requires a few computations. From the expression of $\bs{U}$ and $\wt{\bs{v}}$, we note that
\begin{align}
\bs{U} \cdot \nabla \wt{\bs{v}} = \bs{U}^m \cdot \nabla \wt{\bs{v}} + \bs{U}^c \cdot \nabla \wt{\bs{v}}
\end{align}
Noting (\ref{def Um}) and the information about the support (\ref{Um in supp v tilde}), we get
\begin{align}
\mathbb{P} (\bs{U}^m \cdot \nabla \wt{\bs{v}}) = \bs{0}.
\label{eqn: Um grad v tilde}
\end{align}
Next, we calculate the term $\bs{U}^c \cdot \nabla \wt{\bs{v}}$. Using the information about the support (\ref{v tilde in supp of Uc}), we obtain
\begin{align}
\bs{U}^c \cdot \nabla \wt{\bs{v}} & =
\bs{U}^c \cdot \nabla
\begin{pmatrix}
\mathfrak{a}  \cos(k(x_1 + x_2)) \\
-  \mathfrak{a} \cos(k(x_1 + x_2)
\end{pmatrix} \nonumber \\
& = \mathcal{A} k^{-1} \sin(k(x_1 + x_2)) \; \Xi^\prime(x_2)  \partial_1 \begin{pmatrix}
\mathfrak{a} \cos(k(x_1 + x_2)) \\
-  \mathfrak{a} \cos(k(x_1 + x_2)
\end{pmatrix} \nonumber \\
& = \begin{pmatrix}
- \mathfrak{a} \mathcal{A} \, \Xi^\prime(x_2) \sin^2(k(x_1 + x_2)) \\
\mathfrak{a} \mathcal{A} \, \Xi^\prime(x_2) \sin^2(k(x_1 + x_2))
\end{pmatrix} \nonumber \\
& = \div M,
\label{eqn: Uc grad v tilde div M}
\end{align}
where
\begin{align}
M(x_1, x_2) = 
\begin{pmatrix}
0 & -F(x_1, x_2) \\
0 & F(x_1, x_2)
\end{pmatrix},
\quad
F(x_1, x_2) = \int_{0}^{x_2} \mathfrak{a} \mathcal{A} \, \Xi^\prime(y) \sin^2(k(x_1 + y)) \, {\rm d} y.
\end{align}
We note that $\supp \Xi^\prime \subseteq \left[-\frac{1}{2}, \, -\frac{1}{2} + \delta \right] \cup \left[\frac{1}{2} - \delta, \, \frac{1}{2} \right]$ and $|\Xi^\prime| \lesssim \delta^{-1}$, which leads to 
\begin{align}
\supp F \subseteq \left[-\frac{1}{2}, \, -\frac{1}{2} + \delta \right] \cup \left[\frac{1}{2} - \delta, \, \frac{1}{2} \right], \qquad \text{and} \qquad |F| \lesssim \mathfrak{a} \mathcal{A}.
\end{align}
Next, using (\ref{eqn: Um grad v tilde}) and (\ref{eqn: Uc grad v tilde div M}), we can write
\begin{align}
\nabla \Delta^{-1} \mathbb{P} \bs{U} \cdot \nabla \wt{\bs{v}} = \nabla \Delta^{-1} \mathbb{P} \div M.
\end{align}
Noting that $\nabla \Delta^{-1} \mathbb{P} \div$ is a Calder\'on--Zygmund operator of zeroth order, we obtain the following upper bound
\begin{align}
\langle |\nabla \Delta^{-1} \mathbb{P} \bs{U} \cdot \nabla \wt{\bs{v}}|^2 \rangle \lesssim \langle M^2 \rangle \lesssim \langle F^2 \rangle \leq  C_3 \mathfrak{a}^2 \mathcal{A}^2 \delta,
\end{align}
which leads to the following lower bound on $III$:
\begin{align}
III \geq  - \frac{C_3}{\nu} \mathfrak{a}^2 \mathcal{A}^2 \delta.
\label{est III}
\end{align}



\subsection{Putting everything together}
Combining the lower bounds on $I$, $II$ and $III$ from (\ref{est I}), (\ref{est II}) and (\ref{est III}) respectively, we obtain
\begin{align}
\nu \langle |\nabla \bs{u}|^2 \rangle - \nu \geq \frac{\mathfrak{a} \mathcal{A}}{2} - C_2 \nu (\mathfrak{a}^2 k^2 + \mathfrak{a}^2  \delta^{-1} + \mathfrak{a}^2 k^{-2} \delta^{-3}) - \frac{C_3}{\nu} \mathfrak{a}^2 \mathcal{A}^2  \delta.
\label{dissp u final est}
\end{align}
We make the following choices
\begin{align}
\mathcal{A} = \nu^{\frac{1}{6}}, \quad \mathfrak{a} = \mathfrak{c} \nu^{\frac{1}{6}}, \quad  \delta = \nu^{\frac{2}{3}}, \quad \nu^{-\frac{1}{2}} < k \leq \nu^{-\frac{1}{2}} + \frac{2 \pi}{L_1},
\label{choice of parameters}
\end{align}
such that $k \in \frac{2 \pi}{L_1} \mathbb{N} $. 
With these choices, both upper and lower bound
on $\nu \langle |\nabla \bs{U}|^2 \rangle$ given in (\ref{upper lower bound dissp U}) scales as $\nu^{1/3}$. Therefore, $\nu \langle |\nabla \bs{U}|^2 \rangle \sim \nu^{1/3}$. The positive constant $\mathfrak{c}$ is chosen as follows. We see that all the terms in (\ref{dissp u final est}) scale as $\nu^{1/3}$. Noting that the first term on the right-hand side in (\ref{dissp u final est}) is linear in $\mathfrak{c}$, while the last two are quadratic in $\mathfrak{c}$, we choose $\mathfrak{c}$ to be small enough (depending on $C_2$ and $C_3$) such that
\begin{align}
\nu \langle |\nabla \bs{u}|^2 \rangle \geq \frac{\mathfrak{c}}{4} \nu^{\frac{1}{3}} + \nu,
\end{align}
which is the required bound. 
Finally, we insisted $\delta < 1/4$ as a part of our construction. Therefore, we need $\nu < 1/8$ for our construction to work.


\section{Branching Flows Based Design}
\label{sec: branching flows}
In this section, we provide the proof of the main Theorem \ref{thm: main thm}.  We present an example of convecting vector $\bs{U}$ with energy dissipation scaling as $(\log \nu^{-1})^{-2} $ such that the energy dissipation in $\bs{u}$ scales at least as $(\log \nu^{-1})^{-2} $. The strategy of the proof is similar to that of the previous section; we first identify a candidate design for $\bs{U}$ followed an application of the of variational principle (\ref{var prin: variational principle}). The design of $\bs{U}$  consists of a mean velocity $\bs{U}^m$ (similar to before) plus $\bs{U}^b$ a branching flow inclined at $3 \pi/4$ from the $x_1$ axis as depicted in Figure \ref{fig: branching}. In the next subsection, we set up the parameters required for the branching construction.

 \begin{figure}
\centering
 \includegraphics[scale = 0.5]{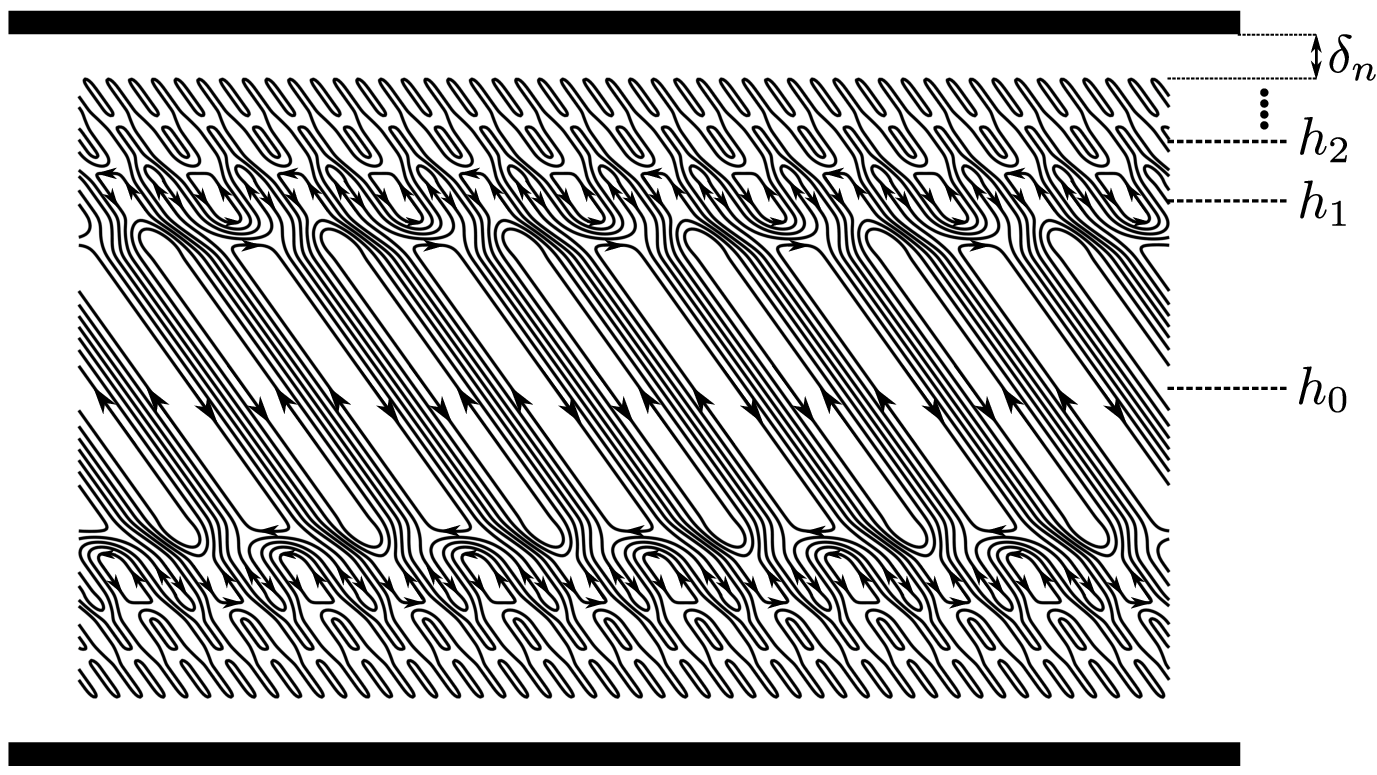}
 \caption{shows the schematic of the branching flow $\bs{U}^b$.}
 \label{fig: branching}
\end{figure}

\subsection{Parameters for the branching construction}
\label{subsec: para branching}
The branching flows consist of several horizontal layers, with the period of the flow doubling from one layer to the next. Within each layer, the velocity field predominantly flows at an angle of $3 \pi/4$. See Figure \ref{fig: branching} for a sketch. We denote the thickness of the $i$th boundary layer by $\delta_i$ for $i \in \{0, \dots n\}$. The relationship between the thicknesses of different boundary layers is governed by
\begin{align}
\delta_i = 2^{-2} \delta_{i-1}, \quad \implies \quad \delta_i = 2^{- 2 i } \delta_{0} \quad \text{for all} \quad i \in \{1, \dots n\}.
\label{branching: bl thick relation}
\end{align}
Here, $\delta_0$ is the thickness of the first layer. The reason for the $2^{-2i}$ decrease in the boundary layer thicknesses will become apparent in later analysis. For the layers lying above and including the midplane ($x_2 = 0$), the $x_2$ coordinates of the start of the layers are given by
\begin{align}
h_0 = 0, \qquad h_i = \sum_{j = 0}^{i-1} \delta_j \quad \text{for } i \in \{1, \dots, n + 1\}.
\end{align}
Our branching flow construction is mirror symmetric about the midplane. Therefore, the $x_2$ coordinates of the starts of the layers below the midplane simply given by $-h_i$. We ensure that the sum of the thicknesses of all these layers (above the midplane) adds up to half the channel width, namely $1/2$, which then leads to
\begin{align}
\sum_{i = 0}^{n} \delta_i = \frac{1}{2}, \quad \implies \quad \delta_0 \frac{1 - 2^{-2(n+1)}}{1 - 2^{-2}} = \frac{1}{2} \quad \implies \quad \frac{1 - 2^{-2}}{2} \leq \delta_0 < \frac{1}{2}.
\label{branching: half channel width}
\end{align}
Consequently, $\delta_0$ is $O(1)$, chosen such that (\ref{branching: half channel width}) is satisfied. Next, we denote the typical horizontal wavenumber of the branching flow at the beginning of the $i$th layer by $k_i$ for $i \in \{0, \dots n\}$. These wavenumbers obey the following relation
\begin{align}
k_i = 2 k_{i-1}, \quad \implies \quad k_i = 2^i k_0,
\label{branching: x wavnum}
\end{align}
where $k_0 \in \frac{2 \pi}{L} \mathbb{N}$ is the wavenumber at $x_2 = 0$ and will be chosen as part of the proof. In our proof, we will ensure that $\delta_i > k_i^{-1}$ for $i \in \{0, \dots n\}$, which only requires
\begin{align}
\delta_n > k_n^{-1}. 
\label{branching: extra cond 1}
\end{align}

We now define a family of smooth cutoff functions $\zeta_i:[-1/2, 1/2] \to [0, 1]$ for $i \in \{0, \dots n\}$, which satisfy the following properties:
\begin{enumerate}[label = (\roman*)]
\item 
$\supp \zeta_0 \subseteq [-h_1, h_1]$ and 
$
\supp \zeta_i \subseteq [-h_{i+1}, -h_{i-1}] \cup [h_{i-1}, h_{i+1}]
$ for $1 \leq i \leq n$,
\item
$
|\nabla^j \zeta_i| \leq C_j \delta_i^{-j},
$
\item 
$
\sum_{i = 0}^{n} \zeta_i^2 \leq 1 \qquad \text{with} \quad \sum_{i = 0}^{n} \zeta_i^2 = 1 \quad \text{when} \quad x_2 \in \left[-\frac{1}{2} + \delta_n, \frac{1}{2} - \delta_n\right].
$
\end{enumerate}
One can indeed find such a family of cutoff function. For example, following \cite{doering2019optimal}, we can choose
\begin{align}
\zeta_i(x_2) = f_i(x_2) + f_{i-1}(\delta_i + 2 h_i -x_2) \quad \text{for} \quad x_2 \in [0, 1/2] \quad \text{and} \quad \zeta_i(x_2) = \zeta_i(-x_2) \quad \text{for} \quad x_2 \in [-1/2, 0],
\end{align}
where 
\begin{align}
\label{functions f and fn}
& f(x_2) = \sqrt{\frac{1}{2} - \frac{1}{2} \tanh\left(\frac{x_2 - \frac{1}{2}}{x_2^2(1-x_2)^2}\right)}, \\ \nonumber
& f_i(x_2) = f\left(\frac{x_2 - h_i}{\delta_i}\right) \text{ for } i \in \{0, \dots n - 1\}, \quad f_n(x_2) = f\left(\frac{x_2 - h_n}{(\delta_n/2)}\right) \text{ and } f_{-1} \equiv 0.
\end{align}
The item (iii) holds because the function $f$ has the properties: $f^2 \leq 1$ and $f^2(x_2) + f^2(1-x_2) = 1$.

 \begin{figure}
\centering
 \includegraphics[scale = 0.5]{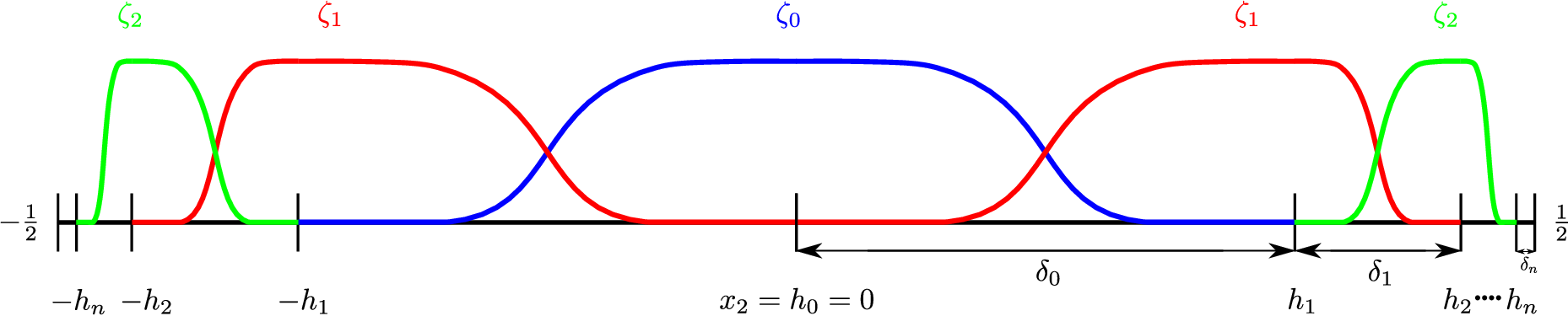}
 \caption{shows cutoff functions $\zeta_i$.}
 \label{fig: parameters}
\end{figure}

\subsection{Choice of the velocity field $\bs{U}$ and $\wt{\bs{v}}$}
We write the velocity field $\bs{U}$ as a sum of two divergence-free velocity fields $\bs{U} = \bs{U}^m + \bs{U}^b$. We define the mean velocity $\bs{U}^m$ as in (\ref{mean velocity})
with $\delta$ replaced with $\delta_n$. We define $\bs{U}^b$ and $\wt{\bs{v}}$ based on streamfunctions as
\begin{align}
&\Psi(\bs{x}) \coloneqq - \sum_{i=0}^{n} \mathcal{A} \; \zeta_i(x_2) \; \frac{ \sin(k_i(x_1 + x_2))}{k_i}, \qquad   \bs{U}_b \coloneqq \nabla^\perp \Psi, \\
&\wt{\psi}(\bs{x}) \coloneqq - \sum_{i=0}^{n} \mathfrak{a} \; \zeta_i(x_2) \; \frac{ \sin(k_i(x_1 + x_2))}{k_i}, \qquad \wt{\bs{v}} \coloneqq \nabla^\perp \wt{\psi}.
\end{align}
Therefore, the choice of $\bs{U}_b$ and $\wt{\bs{v}}$ only differ in the prefactors. From our choices it is clear that 
\begin{align}
\supp_z \bs{U}^m \subseteq \left[-\frac{1}{2}, -\frac{1}{2} + \frac{\delta_n}{5} \right) \cup \left(\frac{1}{2} - \frac{\delta_n}{5}, \frac{1}{2} \right] \text{ and } \supp_z \bs{U}^b, \supp_z \wt{\bs{v}} \subseteq \left[-\frac{1}{2} + \frac{\delta_n}{2}, \frac{1}{2} - \frac{\delta_n}{2} \right]. 
\label{sppt Um Ub vtilde}
\end{align}
The property about the support of $\bs{U}^b$ and $\wt{\bs{v}}$ is true because the function $f_n$ in (\ref{functions f and fn}) is defined based on the boundary layer thickness $\delta_n/2$. Next, we have the following estimate on $\nabla \bs{U}^b$:
\begin{align}
\nu \langle |\nabla \bs{U}^b|^2 \rangle \lesssim \nu \mathcal{A}^2 \sum_{i = 0}^{n} \left(k_i^2 \delta_i + \frac{1}{k_i^2 \delta_i^3}\right) \lesssim \nu \mathcal{A}^2 k_0^2 \delta_0 n,
\label{branching: est grad Ub}
\end{align}
where we used (\ref{branching: bl thick relation}), (\ref{branching: x wavnum}) and (\ref{branching: extra cond 1}) to obtain the last relation.
Now, we can find an estimate on $\nabla \bs{U}^m$ as in (\ref{est Um}) with $\delta$ replaced with $\delta_n$. Finally, noting the information about the support in (\ref{sppt Um Ub vtilde}), we get
\begin{align}
C_l \frac{\nu}{\delta_n} \leq \nu \langle |\nabla \bs{U}|^2 \rangle  \leq C \left( \frac{\nu}{\delta_n} + \nu \mathcal{A}^2 k_0^2 \delta_0 n \right),
\label{branching: up low bdd grad U}
\end{align}
for constants $C_l$ and $C$ independent of viscosity.


\subsection{Estimate on terms $I$, $II$ and $III$}
In the calculation of $I = -2 \langle U_2 \wt{v}_1 \rangle$, only the interactions of the same modes in $U_2$ and $\wt{v}_1$ survive, which leads to
\begin{align}
I = - 2 \left\langle \mathfrak{a} \mathcal{A} \sum_{i = 0}^n \zeta_i^2 \cos^2(k_i (x_1 + x_2)) \right\rangle \geq \frac{\mathfrak{a} \mathcal{A}}{2}.
\label{branching: est I}
\end{align}
Here, we used that $\delta_n \leq 1/4$ and item (iii) about properties of $\zeta_i$ in subsection \ref{subsec: para branching}. The calculation of term $II$ is similar to (\ref{branching: est grad Ub}), which yields
\begin{align}
II = - \nu \langle |\nabla \wt{\bs{v}}|^2 \rangle \geq - C_2 \mathfrak{a}^2 k_0^2 \delta_0 n.
\label{branching: est II}
\end{align}
Estimate on term $III$ requires a few steps. We first write down
\begin{align}
\bs{U} \cdot \nabla \wt{\bs{v}}  = \ol{\bs{W}} + \wt{\bs{W}},
\end{align}
where we define the components of $\ol{\bs{W}}$ and $\wt{\bs{W}}$ as follows
\begin{align}
(\bs{U} \cdot \nabla \wt{\bs{v}})_1 & = \ol{W}_1 + \wt{W}_1 \nonumber \\
& \coloneqq - \mathfrak{a} \mathcal{A} \left(\zeta_i \zeta_i^\prime + \zeta_{i+1} \zeta_{i+1}^\prime \right) + \left(\sum_{j = 1}^{4} \alpha^c_j \cos(j k_i(x_1 + x_2)) + \sum_{j = 1}^{4} \alpha^s_j \sin(j k_i(x_1 + x_2)) \right),
\label{x1 comp of U grad v} \\[5pt]
(\bs{U} \cdot \nabla \wt{\bs{v}})_2 & = \ol{W}_2 + \wt{W}_2 \coloneqq  \mathfrak{a} \mathcal{A} \left(\zeta_i \zeta_i^\prime + \zeta_{i+1} \zeta_{i+1}^\prime \right) - \sum_{j = 1}^{4} \alpha^c_j \cos(j k_i(x_1 + x_2)). 
\label{x2 comp of U grad v}
\end{align}
We note the coefficients $\alpha^c_j$ and $\alpha^s_j$ in (\ref{x1 comp of U grad v}) and (\ref{x2 comp of U grad v}) are smooth function of $x_2$ and are given by
\begin{align}
& \alpha^c_1 = - \frac{3}{4} \mathfrak{a} \mathcal{A} (2 \zeta_{i+1} \zeta_i^\prime + \zeta_i \zeta_{i+1}^\prime), \qquad
\alpha^c_2 = 0, \qquad
\alpha^c_3 = \frac{1}{4} \mathfrak{a} \mathcal{A} (2 \zeta_{i+1} \zeta_i^\prime - \zeta_i \zeta_{i+1}^\prime), \qquad
\alpha^c_4 = 0, \\
& \alpha^s_1 = - \frac{1}{4 k_i} \mathfrak{a} \mathcal{A} \left(\zeta_i^\prime \zeta_{i+1}^\prime - 2 \zeta_{i+1} \zeta_i^{\prime \prime} + \zeta_i \zeta_{i+1}^{\prime \prime}\right), \qquad \alpha^s_2 = \frac{1}{2 k_i} \mathfrak{a} \mathcal{A} \left((\zeta_i^\prime)^2 - \zeta_i \zeta_i^{\prime \prime}\right), \\
& \alpha^s_3 = \frac{1}{4 k_i} \mathfrak{a} \mathcal{A} \left(3 \zeta_i^\prime \zeta_{i+1}^\prime -  \zeta_{i+1} \zeta_i^{\prime \prime} - \zeta_i \zeta_{i+1}^{\prime \prime}\right), \qquad \alpha^s_4 = \frac{1}{4 k_i} \mathfrak{a} \mathcal{A} \left((\zeta_{i+1}^\prime)^2 - \zeta_{i+1} \zeta_{i+1}^{\prime \prime}\right).
\end{align}
Enforcing the condition (\ref{branching: extra cond 1}), we note one important estimate on these coefficients:
\begin{align}
\|\alpha^c_j\|_{L^\infty}, \|\alpha^s_j\|_{L^\infty}, \leq c \mathfrak{a} \mathcal{A} \delta_{i}^{-1}, 
\end{align}
for some constant $c$ independent of $j$ and $\nu$. Now, an estimate on term $III$ can be written as 
\begin{align}
|III| = \nu^{-1} \langle |\nabla \Delta^{-1} \mathbb{P} \bs{U} \cdot \nabla \wt{\bs{v}}|^2 \rangle \leq 2 \nu^{-1} \langle |\nabla \Delta^{-1} \mathbb{P} \ol{\bs{W}}|^2 \rangle + 2 \nu^{-1} \langle |\nabla \Delta^{-1} \mathbb{P} \wt{\bs{W}}|^2 \rangle \eqcolon E_1 + E_2.
\label{bound on term III 1}
\end{align}
We first focus on estimating $E_1$. From (\ref{x1 comp of U grad v}) and (\ref{x2 comp of U grad v}), we notice that 
\begin{align}
\ol{\bs{W}} = \left(-\frac{\mathfrak{a} \mathcal{A}}{2}, \frac{\mathfrak{a} \mathcal{A}}{2} \right)^{T} \sum_{i = 0}^{n} (\zeta_i^2)^\prime.
\end{align}
Next, we observe that for a smooth function $f(x_1, x_2) \coloneq f(x_2)$, which satisfies $f(-1/2) = f(1/2) = 0$, we have $\mathbb{P} (0, \partial_{x_2} f) = 0$. Therefore, we can write  $$\nu^{-1} \langle |\nabla \Delta^{-1} \mathbb{P} \ol{\bs{W}}|^2 \rangle = \nu^{-1} \langle |\nabla \Delta^{-1} \ol{W}_1|^2 \rangle.$$ This is because $\mathbb{P}(\ol{W}_1, 0) = (\ol{W}_1, 0)$ as $\ol{W}_1$ is only a function of $x_2$. To estimate $\nu^{-1} \langle |\nabla \Delta^{-1} \ol{W}_1|^2 \rangle$, we note the following lemma.
\begin{lemma}
Let $f(x_1, x_2) \coloneqq f(x_2)$ be a nonnegative smooth function such that $f(x_2) \leq 1$ and $f(x_2) = 1$ when $x_2 \in [-1/2 + \delta, 1/2 - \delta]$ for some $\delta \leq 1/4$. Then 
\begin{align}
\dashint_{\Omega} |\nabla \Delta^{-1} \partial_{x_2} f|^2 \, {\rm d} \bs{x} \leq 6 \delta.
\label{div inverse func x2 bound}
\end{align}
\label{div inverse func x2}
\end{lemma}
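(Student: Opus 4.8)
The plan is to exploit that $f$ depends only on the wall-normal variable $x_2$, which collapses the computation of $\nabla\Delta^{-1}\partial_{x_2}f$ to a one-dimensional two-point boundary value problem. Writing $\phi \coloneqq \Delta^{-1}\partial_{x_2}f$, the function $\phi$ solves $\Delta\phi = f'(x_2)$ with $\phi = 0$ on $x_2 = \pm\tfrac{1}{2}$ and periodicity in $x_1$. Since the data $f'(x_2)$ is independent of $x_1$, an $x_1$-independent candidate solution exists, and by uniqueness of the Dirichlet problem on $\Omega = \mathbb{T}_{L_1}\times[-\tfrac12,\tfrac12]$ we may conclude $\phi = \phi(x_2)$, so that $\phi$ solves the ODE $\phi''(x_2) = f'(x_2)$ with $\phi(\pm\tfrac12) = 0$.

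Integrating once gives $\phi'(x_2) = f(x_2) + c$ for a constant $c$, and the boundary conditions — equivalently $\int_{-1/2}^{1/2}\phi'\,{\rm d}x_2 = \phi(\tfrac12)-\phi(-\tfrac12) = 0$ — force $c = -\bar f$, where $\bar f \coloneqq \int_{-1/2}^{1/2} f\,{\rm d}x_2$. Hence $\nabla\Delta^{-1}\partial_{x_2}f = (0,\, f(x_2) - \bar f)$, and because the integrand depends only on $x_2$ the volume average reduces to $\dashint_{\Omega}|\nabla\Delta^{-1}\partial_{x_2}f|^2\,{\rm d}\bs{x} = \int_{-1/2}^{1/2}(f-\bar f)^2\,{\rm d}x_2$, since $|\Omega| = L_1$ and the $x_1$-integration contributes the factor $L_1$ that cancels.

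It then remains to bound this variance. The hypotheses $0\le f\le 1$ and $f\equiv 1$ on $[-\tfrac12+\delta,\tfrac12-\delta]$ give $\bar f \ge 1-2\delta$, i.e. $0 \le 1-\bar f \le 2\delta$. Using $f^2 \le f$ (from $0\le f\le 1$) together with $\int_{-1/2}^{1/2}1\,{\rm d}x_2 = 1$, I would compute $\int_{-1/2}^{1/2}(f-\bar f)^2\,{\rm d}x_2 = \int_{-1/2}^{1/2}f^2\,{\rm d}x_2 - \bar f^2 \le \bar f - \bar f^2 = \bar f(1-\bar f) \le 2\delta \le 6\delta$, which is the claimed estimate (in fact with substantial room to spare).

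The argument is essentially elementary, so I do not anticipate a genuine obstacle — this is a technical lemma feeding the estimate of the term $E_1$ in $III$. The only points requiring a moment's care are the justification that the Dirichlet solution inherits the $x_1$-independence of its data (via uniqueness), and the correct selection of the integration constant $c = -\bar f$ so that the orthogonal mean is subtracted; getting that mean right is exactly what produces the variance structure and hence the linear-in-$\delta$ bound.
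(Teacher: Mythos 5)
Your proof is correct and takes essentially the same route as the paper: both reduce to the one-dimensional Dirichlet problem in $x_2$ and arrive at the explicit formula $\nabla\Delta^{-1}\partial_{x_2}f = f(x_2) - \int_{-1/2}^{1/2} f\,{\rm d}x_2$ (the paper writes the antiderivative explicitly, you fix the integration constant via the boundary conditions — same computation). The only difference is the concluding estimate: the paper bounds the integrand pointwise by $|1-f| + 2\delta$ and splits into boundary and interior regions to reach $6\delta$, while your variance identity combined with $f^2 \le f$ yields the sharper constant $2\delta$, which of course implies the stated bound.
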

\begin{proof}
We note that 
\begin{align}
\Delta^{-1} \partial_{x_2} f = \int_{-\frac{1}{2}}^{x_2} f(x_2^\prime) \, {\rm d} x_2^\prime - \left(x_2 + \frac{1}{2} \right) \int_{-\frac{1}{2}}^{\frac{1}{2}} f(x_2^\prime) \, {\rm d} x_2^\prime,
\end{align}
which implies
\begin{align}
\nabla \Delta^{-1} \partial_{x_2} f = f(x_2) - \int_{-\frac{1}{2}}^{\frac{1}{2}} f(x_2^\prime) \, {\rm d} x_2^\prime.
\end{align}
Therefore, we have
\begin{align}
|\nabla \Delta^{-1} \partial_{x_2} f| \leq |1 - f(x_2)| + \left|1 - \int_{-\frac{1}{2}}^{\frac{1}{2}} f(x_2^\prime) \, {\rm d} x_2^\prime\right| \leq |1 - f(x_2)|  + 2 \delta.
\end{align}
Therefore, we get the following bound
\begin{align}
|\nabla \Delta^{-1} \partial_{x_2} f| \leq
\begin{cases}
1 + 2 \delta \quad &\text{when} \quad  x_2 \left(-\frac{1}{2}, - \frac{1}{2} + \delta \right) \cup \left(\frac{1}{2} + \delta,  \frac{1}{2} \right), \\
2 \delta \quad &\text{when} \quad  x_2 \in \left[-\frac{1}{2} + \delta,  \frac{1}{2} - \delta \right].
\end{cases}
\end{align}
Finally, a volume integration gives the required bound (\ref{div inverse func x2 bound}).
\end{proof}
Since, we have $\ol{W}_1 = - \partial_{x_2} ( \mathfrak{a} \mathcal{A} \sum_{i=0}^{n} \zeta_i^2/2)$, using Lemma \ref{div inverse func x2}, we obtain the following bound:
\begin{align}
E_1 \lesssim \nu^{-1} \mathfrak{a}^2 \mathcal{A}^2 \delta_n.
\label{branching: est E1}
\end{align}
Next, we estimate $E_2$ in (\ref{bound on term III 1}) as follows:
\begin{align}
2 \nu^{-1} \langle |\nabla \Delta^{-1} \mathbb{P} \wt{\bs{W}}|^2 \rangle & = 2 \nu^{-1} \left\langle \left|\nabla \Delta^{-1} \mathbb{P} \partial_{x_1} \scaleobj{.8}{\int_{0}^{x_1}}  \wt{\bs{W}}(x_1^\prime, x_2) \, {\rm d} x_1^\prime \right|^2 \right\rangle \nonumber \\
& \lesssim  \nu^{-1} \left\langle \left| \scaleobj{.8}{\int_{0}^{x_1}}  \wt{\bs{W}}(x_1^\prime, x_2) \, {\rm d} x_1^\prime\right|^2 \right\rangle \nonumber \\
& \lesssim \nu^{-1}  \mathfrak{a}^2 \mathcal{A}^2 \sum_{i = 0}^{n} \frac{1}{k_i^2 \delta_i} \nonumber \\
& \lesssim C \nu^{-1}  \mathfrak{a}^2 \mathcal{A}^2 \frac{n}{k_0^2 \delta_0}.
\label{branching: est E2}
\end{align}
The function $\scaleobj{.8}{\int_{0}^{x_1}}  \wt{\bs{W}}(x_1^\prime, x_2) \, {\rm d} x_1^\prime$ is periodic in $x_1$ and is therefore well-defined on $\Omega$. To obtain the second line in (\ref{branching: est E2}), we used the fact that $\nabla \Delta^{-1} \mathbb{P} \partial_{x_1}$ is a Calder\'on--Zygmund operator of zeroth-order. To obtain the last line, we note that
\begin{align}
\left| \scaleobj{.8}{\int_{0}^{x_1}}  \wt{\bs{W}}(x_1^\prime, x_2) \, {\rm d} x_1^\prime\right| \lesssim \frac{\mathfrak{a} \mathcal{A}}{k_i \delta_i}, \quad \text{when} \quad x_2 \in [-h_{i+1}, h_i] \cup [h_i, h_{i+1}].
\end{align}
Combining (\ref{branching: est E1}) and (\ref{branching: est E2}), we get an estimate on term $III$:
\begin{align}
|III|
& \leq C_3 \nu^{-1}  \mathfrak{a}^2 \mathcal{A}^2 \left( \frac{\delta_0}{2^{2 n}} +  \frac{n}{k_0^2 \delta_0} \right) 
\label{branching: est III}
\end{align}

\subsection{Proof of Theorem \ref{thm: main thm}}
We make the following choice of the parameters:
\begin{align}
&\qquad \qquad \qquad \mathcal{A} = \frac{1}{\log_2 \nu^{-1}}, \qquad \qquad \mathfrak{a} = \frac{\mathfrak{c}}{\log_2 \nu^{-1}}, \\[5pt]
& \frac{1}{\nu^{\frac{1}{2}} \left(\log_2 \nu^{-1}\right)^{\frac{1}{2}}} < k_0 \leq \frac{1}{\nu^{\frac{1}{2}} \left(\log_2 \nu^{-1}\right)^{\frac{1}{2}}} + \frac{2 \pi}{L}, \qquad n = \left\lfloor \frac{1}{2} \log_2 \left(\frac{1}{\nu \log_2^2 \nu^{-1}}\right) \right\rfloor.
\end{align}
With these choices, from (\ref{branching: up low bdd grad U}), we see that $\nu \langle |\nabla \bs{U}|^2 \rangle \sim (\log_2 \nu^{-1})^{-2}.$ Next, we see that the estimates (\ref{branching: est I}), (\ref{branching: est II}) and (\ref{branching: est III}) for terms $I$, $II$ and $III$, respectively, also scale as $ (\log_2 \nu^{-1})^{-2}.$ Therefore, by choosing $\mathfrak{c}$ to be small enough, we get that $\nu \langle |\nabla \bs{u}|^2 \rangle \gtrsim (\log_2 \nu^{-1})^{-2}.$ Finally, to satisfy the condition (\ref{branching: extra cond 1}), along with the constraint $\delta_n \leq 1/4$, we require $\nu < 1/500.$

\section{Conclusion and future outlook}
\label{sec: conclusion}
In this paper, we explored the phenomenon of anomalous and enhanced dissipation in a passive divergence-free vector field in a Couette flow setting. One of the main result of the paper is an upper bound on the rate of energy dissipation $\varepsilon_{\bs{u}}$ in the passive vector field $\bs{u}$ in terms of the energy dissipation $\varepsilon_{\bs{U}}$in the convecting vector field $\bs{U}$ (see Theorem \ref{thm: upper bound}). Complementary to this upper bound, in Theorem \ref{thm: main thm}, we provided a construction of a family of convecting vector fields $\{\bs{U}_{\nu}\}_{\nu > 0}$ dissipating energy as $(\log \nu^{-1})^2$ for which the energy dissipation in the passive vector $\bs{u}$ goes at least as $(\log \nu^{-1})^2$. The proof of this result is based on a variational principle for the rate of energy dissipation given in Section \ref{sec: var prin}. Our results in Theorem \ref{thm: upper bound} and Theorem \ref{thm: main thm} can be restated in terms of the momentum transport or the transverse current of stream-wise ($x_1$) component of velocity $\bs{u}$:
\begin{align}
J^{\bs{u}} = \nu \ol{\partial_2 u_1} - \ol{U_2 u_1},
\end{align}
as $J^{\bs{u}}$ relates to energy dissipation through formula (\ref{eqn: dissp for 3}). 

In the context of the incompressible vector diffusion equation, a key question of interest is Question \ref{Q: anom dissp div free} in the Couette flow setting. Most studies on anomalous dissipation in passive scalars rely on the Lagrangian viewpoint, which cannot be directly applied to the passive vector equation considered in this paper. Consequently, addressing this question requires novel approaches that go beyond traditional Lagrangian methods. The variational approach presented in this paper could be one such potential method of tackling this question. We used two-dimensional branching flows within the variational principle, which allows us to answer this question with a logarithmic scaling discrepancy. It is known from the studies of optimal scalar transport problems, that two-dimensional branching flows lead to a logarithmic scaling miss \citep{doering2019optimal}. In \citep{kumar2022three, kumar2023bulk}, it was shown that such logarithmic correction can be eliminated (thus achieving the clean scaling of optimal scalar transport) through the use of three-dimensional branching flows. Consequently, it is plausible that, in the context of passive vector fields as well, a construction based on three-dimensional branching flows could fully resolve Question \ref{Q: anom dissp div free}. Another promising approach that can be fruitful is due to \citet{armstrong2023anomalous} on anomalous dissipation in a passive scalar, which is based on homogenization techniques. It would be interesting to see whether their approach can be adapted to resolve this question.

We note that Question \ref{Q: anom dissp div free} may likely be answered by considering a freely decaying flow, i.e., a flow within a periodic domain or a smooth bounded domain with homogeneous Dirichlet boundary conditions, in the absence of forcing, starting from an initial condition $\bs{u}_0$ independent of viscosity. In this scenario, the long-time average should be replaced with a finite-time average. We can consider this problem in three dimensions and choose the velocity fields of the form $\bs{U} = (U_1(x_1, x_2), U_2(x_1, x_2), 0)$ and $\bs{u} = (0, 0, u_3(x_1, x_2))$. The governing equation for $u_3$ then becomes the advection-diffusion equation. Finally, to answer Question \ref{Q: anom dissp div free} one might consider choosing $\bs{U}$ as an alternating shear flow or a checkered board flow, as employed by \citep{drivas2022anomalous, colombo2023anomalous, elgindi2023norm} in studies of anomalous dissipation in a passive scalar. However, we do not pursue this solution in this paper because the velocity fields $\bs{U}$ and $\bs{u}$ differ significantly, which we think is an artifact of the finite-time average. Consequently, such a solution is unlikely to exist in the settings discussed in the Introduction, where there is a continuous supply of energy at the larger scales and a long-time average is used in the definition of energy dissipation, which renders the transient nature of the problem irrelevant.  But more importantly, our future interest lies in the potential to close the gap between $\bs{U}$ and $\bs{u}$ using computational tools, which is the primary motivation for this study. Because $\bs{U}$ and $\bs{u}$ described above are so distinct that it seems unlikely they could be unified to obtain a solution to the Navier--Stokes equation.  For example, such a solution does not appear to work when the number of input/output passes are increased to $n = 2$ (see Question \ref{Q: multiple input/output} below).

We conclude the paper by noting a few problems based on the input/output perspective taken in this paper, where we believe it is possible to make progress using both analysis and computational tools and can provide valuable insights into the solutions of Navier--Stokes equation. The first problem is related to  optimal mixing of a passive scalar in the limit of zero diffusion, which has been a focus of recent studies. For cases where the underlying vector field is Sobolev, $\bs{U} \in L^\infty_t W_{\bs{x}}^{1, p}$, this problem has been examined using both computational approaches \citep{lin2011optimal, iyer2014lower}  and rigorous upper \citep{crippa2008estimates, seis2013maximal, iyer2014lower} and lower bounds \citep{YaoZlatos17, AlbertiCrippaMazzucato19, ElgindiZlatosuniversalmixer}, demonstrating that the optimal mixing rate is exponential in time. However, one crucial drawback of these studies is that the velocity field lacks a governing equation. In particular, the velocity field does not arise as a solution of the Navier--Stokes equation in one of the three settings discussed in the Introduction. The point of view taken in this paper can remedy this gap to certain extent. In particular, we ask whether the exponential mixing still holds if the vector field $\bs{u}$ is a solution of (\ref{eqn: conv diff u}{\color{blue}a-b}) and convects a scalar $\theta$:
\begin{question}[Mixing of a scalar]
Consider the advection of a scalar $\theta$, $\partial_t \theta + \bs{u} \cdot \nabla \theta = 0$, by a divergence-free velocity field $\bs{u}$ which itself is obtained from solving $\partial_t \bs{u} + \bs{U} \cdot \nabla \bs{u} = - \nabla p + \nu \Delta \bs{u}$. The velocity field $\bs{U}$ is smooth divergence free and satisfies the bound $\norm{\bs{U}(t, \cdot)}_{W^{1, p}} \leq B$. Is it then true that for $s < 0$, $\norm{\theta(t, \cdot)}_{H^s} \geq \exp(-c B t)$ for some constant $c$ independent of $B$.
\end{question}
A complementary question would be if exponential mixing is a lower bound then is it sharp, i.e., can one produce examples of $\bs{U}$ for which the exponential mixing holds? A similar question can be asked regarding the mixing behavior of a passive vector field.
\begin{question}[Mixing of a vector]
Consider the advection of a vector of a divergence-free velocity field $\bs{u}$ which itself is obtained from solving $\partial_t \bs{u} + \bs{U} \cdot \nabla \bs{u} = - \nabla p$. The velocity field $\bs{U}$ is smooth divergence free and satisfies the bound $\norm{\bs{U}(t, \cdot)}_{W^{1, p}} \leq B$. Is it then true that for $s < 0$, $\norm{\bs{u}(t, \cdot)}_{H^s} \geq \exp(-c B t)$ for some constant $c$ independent of $B$.
\end{question}
The equation (\ref{eqn: conv diff u}) discussed in this paper represents a single level of input/output pass. However, one could more broadly consider the mixing questions mentioned above or the problem of anomalous dissipation within a framework involving multiple levels of input/output pass. For instance, one might ask
\begin{question}[Multiple input/output passes]
\label{Q: multiple input/output}
Consider a set of partial differential equations \begin{align}
\partial_t \bs{u}_i + \bs{u}_{i-1} \cdot \nabla \bs{u}_i = - \nabla p_i + \nu \Delta \bs{u}_i \qquad i \in \{1, \dots n\},  
\label{eqn: multiple input/output}
\end{align}
where the velocity field $\bs{u}_i$ for all $i \in \{0, \dots n\}$ are divergence-free. Is it possible to produce an example  of smooth velocity fields $\bs{u}_0$ which exhibit anomalous dissipation such that $\nu \langle | \nabla \bs{u}_n|^2 \rangle \geq c_n > 0,$ where $c_n$ is a constant independent of the viscosity. A more challenging question is whether  $c_n$ can also be made independent of $n$.
\end{question}

The fixed point iteration (\ref{eqn: multiple input/output}) (without the time derivative) is used to find numerical solution of the steady Navier--Stokes equation \citep{kay2002preconditioner} and goes by the name of Arrow–Hurwicz algorithm \citep{temam2024navier}.


Next, we consider the Naiver--Stokes equation under the Boussinesq approximation coupled with the advection-diffusion equation. However, we replace the term $\bs{u} \cdot \nabla \bs{u}$ by $\bs{U} \cdot \nabla \bs{u}$. Consequently, the momentum part of the equation becomes linear but the interaction of $\bs{u}$ with $T$ in the convection-diffusion equation leads to nonlinearity. One can consider these equations, for example, in the standard Rayliegh--B\'enard setup. The Rayleigh--B\'enard convection (RBC) is the flow driven by buoyancy in a differentially heated layer of fluid. In particular, the velocity fields $\bs{U}$ and $\bs{u}$ satisfy homogeneous Dirichlet boundary condition and the temperature $T = 1$ at $x_2 = -1/2$ and $T = 0$ at $x_2 = 0$. The two nondimensional parameters governing the equations are the Rayleigh number (based on $1$ unit gap width and $1$ unit temperature difference) $Ra = \nu^{-1} \kappa^{-1} g \alpha$ and the Prandtl number $Pr = \kappa^{-1} \nu$, where $\nu$, $\kappa$, $\alpha$ and $g$ are the viscosity, thermal diffusivity, coefficient of thermal expansion and the gravitational acceleration, respectively. The Nusselt number, which quantifies nondimensional heat transfer,  is given by $Nu = \langle |\nabla T|^2 \rangle = 1 + \langle u_2 T \rangle = 1 + Ra^{-1} \langle |\nabla \bs{u}|^2\rangle$. One of the competing theories of heat transfer suggests that for Prandtl number order one, the dimensional rate of heat transfer becomes independent of the molecular diffusion (upto possibly a logarithmic correction) in the limit of large Rayleigh number, which in terms of Nusselt number is $Nu \sim Ra^{1/2}$ \citep{kraichnan1962turbulent, grossmann2000scaling}. This rate of heat transfer leads to anomalous dissipation in the velocity field $\bs{u}$.  Given this prediction, we pose the following question, which is presumably easier to answer than the full nonlinear system (the Naiver--Stokes equation under the Boussinesq approximation) but more challenging than a purely linear system.
\begin{question}[A nonlinear problem]
Consider the following set of equations
\begin{align}
& \partial_t \bs{u} + \bs{U} \cdot \nabla \bs{u} = - \nabla p + Pr \Delta \bs{u} + Pr Ra T \bs{e}_2, \\
& \partial_t T + \bs{u} \cdot \nabla T = \Delta T,
\end{align}
where both of the velocity fields $\bs{u}$ and $\bs{U}$ are divergence-free. Is it possible to produce example of smooth vector fields $\bs{U}$ for which $Ra^{-1} \langle |\nabla \bs{U}|^2 \rangle \sim Ra^{1/2}$ such that there is a solution $(\bs{u}, T)$ satisfying $Ra^{-1} \langle |\nabla \bs{u}|^2 \rangle = \langle u_2 T \rangle \gtrsim Ra^{1/2}$ as $Ra \to \infty$.
\end{question}



\bibliographystyle{apalike}
\bibliography{references.bib}

\end{document}